\newtheorem*{cor}{Corollary}
\newtheorem*{lem}{Lemma}
\newtheorem*{prop}{Proposition}
\theoremstyle{definition}
\theoremstyle{definition}
\newtheorem{thm}{Theorem}
\newtheorem*{conj}{Conjecture}
\newenvironment{pf}{\proof}{\endproof}
\def\mydggeometry{\makeatletter\dg@YGRID=1\dg@XGRID=20\unitlength=0.003pt\makeatother}
\makeatother \theoremstyle{remark}
\numberwithin{equation}{section}
\DeclareMathOperator{\Ht}{ht}
\DeclareMathOperator{\ad}{ad}
\let\bwdg\bigwedge
\def\bigwedge{{\textstyle\bwdg}}
\def\ps@pprintTitle{%
     \let\@oddhead\@empty
     \let\@evenhead\@empty
     \def\@oddfoot{\footnotesize\itshape%
       \hfill\today}%
     \let\@evenfoot\@oddfoot}
\journal{\relax}
\newcommand{\wt}{\operatorname{wt}}
\newcommand{\tensor}{\otimes}
\newcommand{\nc}{\newcommand}
\renewcommand{\Bbb}{\mathbb}
\nc\bomega{{\mbox{\boldmath $\omega$}}} \nc\bpsi{{\mbox{\boldmath
$\Psi$}}}
 \nc\balpha{{\mbox{\boldmath $\alpha$}}}
 \nc\bpi{{\mbox{\boldmath $\pi$}}}
\newcommand{\lie}[1]{\mathfrak{#1}}
 \nc{\Hom}{\operatorname{Hom}}
\nc{\End}{\operatorname{End}} \nc{\wh}[1]{\widehat{#1}}
\nc{\Ext}{\operatorname{Ext}} \nc{\ch}{\operatorname{ch}}
\nc{\ev}{\operatorname{ev}} \nc{\Ob}{\operatorname{Ob}}
\nc{\soc}{\operatorname{soc}} \nc{\rad}{\operatorname{rad}}
\nc{\head}{\operatorname{head}}
 \nc{\Cal}{\mathcal} \nc{\Xp}[1]{X^+(#1)} \nc{\Xm}[1]{X^-(#1)}
\nc{\on}{\operatorname} \nc{\Z}{{\mathbf Z}} \nc{\J}{{\mathcal J}}
\nc{\C}{{\mathbf C}} \nc{\Q}{{\mathbf Q}}
\nc{\N}{{\Bbb N}} \nc\boa{\mathbf a} \nc\bob{\mathbf b} \nc\boc{\mathbf c}
\nc\bod{\mathbf d} \nc\boe{\mathbf e} \nc\bof{\mathbf f} \nc\bog{\mathbf g}
\nc\boh{\mathbf h} \nc\boi{\mathbf i} \nc\boj{\mathbf j} \nc\bok{\mathbf k}
\nc\bol{\mathbf l} \nc\bom{\mathbf m} \nc\bon{\mathbf n} \nc\boo{\mathbf o}
\nc\bop{\mathbf p} \nc\boq{\mathbf q} \nc\bor{\mathbf r} \nc\bos{\mathbf s}
\nc\bou{\mathbf u} \nc\bov{\mathbf v} \nc\bow{\mathbf w} \nc\boz{\mathbf z}
\nc\boy{\mathbf y} \nc\ba{\mathbf A} \nc\bb{\mathbf B} \nc\bc{\mathbf C}
\nc\bd{\mathbf D} \nc\be{\mathbf E} \nc\bg{\mathbf G} \nc\bh{\mathbf H}
\nc\bi{\mathbf I} \nc\bj{\mathbf J} \nc\bk{\mathbf K} \nc\bl{\mathbf L}
\nc\bm{\mathbf M} \nc\bn{\mathbf N} \nc\bo{\mathbf O} \nc\bp{\mathbf P}
\nc\bq{\mathbf Q} \nc\br{\mathbf R} \nc\bs{\mathbf S} \nc\bt{\mathbf T}
\nc\bu{\mathbf U} \nc\bv{\mathbf V} \nc\bw{\mathbf W} \nc\bz{\mathbf Z}
\nc\bx{\mathbf x}
\begin{document}
\begin{frontmatter}
\title{Minimal affinizations as projective objects\tnoteref{fn1}}
\author[ucr]{Vyjayanthi Chari}
\address[ucr]{Department of Mathematics, University of California, Riverside, CA 92521}
\emailauthor{vyjayanthi.chari@ucr.edu}{V.~C.}
\author[ucr]{Jacob Greenstein}\emailauthor{jacob.greenstein@ucr.edu}{J.~G.}
\tnotetext[fn1]{Partially supported by the NSF grants DMS-0901253 (V.~C.) and DMS-0654421 (J.~G.)}

\begin{abstract}
We prove that the specialization to $q=1$ of a Kirillov-Reshetikhin module for 
an untwisted quantum affine algebra of classical type is 
projective  in a suitable  category. This  yields a uniform character formula for the 
Kirillov-Reshetikhin modules. 
We conjecture that these results  holds for specializations 
of minimal affinization with some restriction on the corresponding highest weight. We discuss the 
connection with the conjecture of  Nakai and Nakanishi on $q$-characters of minimal affinizations. We establish this conjecture 
in some special cases. This also leads us to conjecture  an alternating sum formula for Jacobi-Trudi determinants.
\end{abstract}

\end{frontmatter}

\section*{Introduction}

The study of finite-dimensional representations of quantum affine algebras has attracted a lot of attention over the last twenty years. Nevertheless, there are many natural questions which remain unanswered, for instance, the basic problem of determining the character of an irreducible representation of a quantum affine algebra. Part of the reason for the difficulty is that there are far too many irreducible representations and although  character is known for a generic representation, the character in the non--generic case is difficult to understand. A particular family of these non--generic modules are the Kirillov-Reshetikhin modules and more generally the family of minimal affinizations of a dominant integral weight.

In~\cite{KR} Kirillov and Reshetikhin conjectured the existence of certain simple modules over a quantum affine algebra whose tensor products (and in particular the modules
themselves) have prescribed decompositions as direct sums of simple modules over the quantized enveloping algebra of the underlying simple Lie algebra $\lie g$. They are parametrized by
pairs $(m,i)$ where $m$ is a positive integer and $i$ is a node of the Dynkin diagram of~$\lie g$. More generally, after~\cite{Ch1} one can consider minimal affinizations
of simple finite dimensional modules over $U_q(\lie g)$, namely, minimal (in some natural partial order) simple modules over the quantum affine algebra with the ``top'' part
isomorphic to the given simple finite dimensional $U_q(\lie g)$-module.
Kirillov-Reshetikhin modules and minimal affinizations were and still are being actively studied
(to name but a few, cf. \cite{Ch2,CMkir1,FL,FSS,Her1,M,NN1,Nak1}, see also \cite{CH} for extensive bibliography on the subject). One of their pleasant properties is that they admit specializations at $q=1$
which can be naturally regarded
as modules over the current algebra $\lie g\tensor \bc[t]$. Moreover when  $\lie g$ is of classical type they are actually modules  for  the truncated current algebra
$\lie g\tensor \bc[t]/(t^2)$ which is isomorphic to the semidirect product of $\lie g$ with its adjoint representation.

The specializations of Kirillov-Reshetikhin modules and minimal affinizations are indecomposable but no longer simple and it is only natural to ask whether they
have any distinguished
homological properties.
In the present work, we apply the methods developed in~\cite{CG,CG1} to construct, for each dominant weight $\lambda$ of~$\lie g$, a
natural Serre subcategory of the category of graded modules over the truncated current algebra. We describe the  projective cover of an arbitrary simple object in this category
explicitly in terms of generators and relations.  We are also able to
 obtain an alternating character formula, which expresses the (graded) character of
 the projective cover of a simple object in terms of  the character of the simple object and the (graded) characters of (finitely many) projectives following it in a natural partial order.
The formula involves coefficients which can be computed from analyzing the module structure of the exterior algebra of an abelian ideal (naturally associated with the irreducible object)
in a fixed Borel subalgebra of~$\lie g$.

We show in Proposition \ref{krp} that
any specialized  Kirillov-Reshetikhin module is a projective object in one of these subcategories. In general, we prove by using a result of \cite{M} and Theorem \ref{thm1} of this paper, that the specialization of a minimal affinization is a quotient of the projective cover of an irreducible object in the corresponding subcategory. A reformulation of the conjecture in \cite{M} is that the minimal affinization is isomorphic to the projective cover.

The characters of minimal affinizations are not known in general and there are conjectures of  Nakai and Nakanishi (\cite{NN1}) that they are given by Jacobi-Trudi type formulae. This was  established by~\cite{Her2}
for the type~$B$. Combining Theorem \ref{thm2} of this paper with the conjectures of \cite{M} and \cite{NN1}, we conjecture an alternating formula for the Jacobi-Trudi determinants, which can be stated in a purely combinatorial way. We are able to verify this conjecture in certain cases which
proves that specializations of minimal affinization are projective objects in our subcategories and allows us to
conjecture that this is always the case for $\lie g$ of classical type.

The paper is organized as follows. Section~\ref{MR} contains the main results and the necessary preliminaries. The main results are proven in Section~\ref{PT1}. 
Finally, Section~\ref{maff} contains a brief exposition of the necessary results on  Kirillov-Reshetikhin modules and minimal  affinizations and connects the existing literature to
the main results and conjectures  of the paper.

\subsection*{Acknowledgments}
We are grateful to D.~Hernandez for stimulating discussions and to T.~Nakanishi for bringing to our attention the results of~\cite{KT}. This work was finished while the
second author was visiting Max-Planck-Institut f\"ur Mathematik and he thanks the Institute for its hospitality.

\section{The main results}\label{MR}

\subsection{}\label{MR.10}
Let $\lie g$ be a finite-dimensional
 complex simple Lie algebra with  a fixed  Cartan subalgebra $\lie h$.   Let $R$  be the
  corresponding root system and  fix a set $\{\alpha_i: i\in I\}\subset\lie h^* $ (where $I=\{1,\dots,\dim\lie h\}$) of simple roots for $R$.
   The root lattice $Q$ is the $\bz$-span of the simple roots while $Q^+$  is the $\bz_+$-span of the simple roots, and $R^+= R\cap  Q^+$ denotes the set of positive roots in $R$.
   Given $i\in I$, let $\epsilon_i:Q^+\to \bz_+$ be the homomorphism of free semi-groups defined by setting $\epsilon_i(\alpha_j)=\delta_{i,j}$, $j\in I$,
   and define  $\Ht:Q^+\to\bz_+$ by $\Ht(\nu)=\sum_{i\in I}\epsilon_i(\nu)$, $\nu\in Q^+$.

   The restriction of  the Killing  form $\kappa:\lie g\times\lie g\to\Bbb C$   to $\lie h\times\lie h$ induces a non--degenerate bilinear form $(\cdot ,\cdot )$ on $\lie h^*$ and we let $\{\omega_i\,:\, i\in I\}\subset\lie h^*$ be the fundamental weights defined
by $2(\omega_j,\alpha_i)=\delta_{i,j}(\alpha_i,\alpha_i)$.
 Let $P$ (respectively  $P^+$) be the $\bz$- (respectively  $\bz_+$-) span of the $\{\omega_i:i\in I\}$ and note that $Q\subseteq P$. Given $\lambda,\mu\in P$ we say that $\mu\le \lambda$ if and only if $\lambda-\mu\in Q^+$.
 Clearly $\le $ is a partial order on $P$. The set $R^+$ has a unique maximal element with respect to this order which is
 denoted by $\theta$ and is called the highest root of $R^+$.

 \subsection{} \label{MR.20}
 Given $\alpha\in  R$, let $\lie g_\alpha\subset\lie g$ be the corresponding root space and define subalgebras
 $\lie n^\pm$ of $\lie g$ by $$\lie n^\pm=\bigoplus_{\alpha\in  R^+}\lie g_{\pm\alpha}.$$
 We have isomorphisms of vector spaces
  \begin{equation}\label{C:gtriang}\lie g=\lie n^-\oplus\lie h\oplus\lie n^+,\ \ \bu(\lie
g)\cong\bu(\lie n^-)\otimes\bu(\lie h)\otimes \bu(\lie
n^+),\end{equation}
where $\bu(\lie a)$ is the universal enveloping algebra of the Lie algebra~$\lie a$.
   For $\alpha\in  R^+$, fix elements $x^\pm_\alpha\in\lie g_{\pm\alpha}$ and $h_\alpha\in\lie h$ such that they span a Lie subalgebra of $\lie g$ isomorphic to $\lie{sl}_2$, i.e., we have $$[h_\alpha,x^\pm_\alpha]=\pm 2
   x^\pm_\alpha,\qquad [x^+_\alpha, x^-_\alpha]= h_\alpha, $$ and more generally, assume that the set $\{x^\pm_\alpha: \alpha\in R^+\}\cup\{h_{\alpha_i}: i\in I\}$ is a Chevalley basis for~$\lie g$.
We abbreviate $x_i^\pm:=x_{\alpha_i}^\pm$ and $h_i:=h_{\alpha_i}$, $i\in I$.

\subsection{} \label{MR.30}
Let $V$ be a $\lie h$-module. Given $\mu\in\lie h^*$, the set  $V_\mu=\{v\in V: hv=\mu( h)v,\, h\in\lie h\}$ is called the $\mu$-weight space of $V$ .  We say that $V$ is a weight
module and that $\wt V$ is the set of weights of $V$, if $$V=\bigoplus_{\mu\in\lie h^*}
V_\mu,\qquad \wt V=\{\mu\in\lie h^*: V_\mu\ne 0\}.$$
Let $\bz[\lie h^*]$ be the integral group algebra of $\lie h^*$ and for $\mu\in\lie h^*$, let $e(\mu)$ be the corresponding generator of $\bz[\lie h^*]$.
If all weight spaces of~$V$ are finite dimensional, define $\ch V\in\bz[\lie h^*]$  by  $$\ch V=\sum_{\mu\in\lie h^*}\dim V_\mu\, e(\mu).$$
Observe that for two such modules $V_1$ and $V_2$, we have $$\ch(V_1\oplus V_2)=\ch  V_1+\ch V_2,\qquad \ch  (V_1\otimes V_2)=\ch V_1 \ch V_2. $$

  \subsection{}\label{MR.40}  For $\lambda\in P^+$,
let $V(\lambda)$ be the $\lie g$-module  generated by an element
$v_\lambda$  with defining
relations:$$hv_\lambda=\lambda(h)v_\lambda,\quad h\in\lie h\qquad
x_{i}^+v_\lambda=0=(x_i^-)^{\lambda(h_i)+1} v_\lambda=0,\qquad i\in I.$$
It is well-known that
$$
\dim
V(\lambda)<\infty,\qquad \dim V(\lambda)_\lambda=1,\qquad \wt
V(\lambda)\subset\lambda-Q^+.
$$
An irreducible finite-dimensional
$\lie g$-module is isomorphic to $V(\lambda)$ for a unique
$\lambda\in P^+$ and any finite-dimensional $\lie g$-module is
completely reducible. In other words, if $\mathcal F(\lie g)$ is the category of finite-dimensional $\lie g$-modules with morphisms being maps of $\lie g$-modules, then $\mathcal F(\lie g)$ is a semisimple category and the isomorphism classes of simple objects are indexed by elements of~$P^+$.

\subsection{}\label{MR.60}
For the rest of the paper we will be concerned with the Lie algebra~$\lie g\ltimes \lie g_{\ad}$.
As a vector space $$\lie g\ltimes\lie
g_{\ad}=\lie g\oplus\lie g,$$ and the Lie bracket is given by
$$[(x,y),(x',y')]=([x,x'],[x,y']+[y,x']).$$ In particular if we
identify $\lie g$ (respectively, $\lie g_{\ad}$) with the subspace
$\{(x,0):x\in\lie g\}$ (respectively, $\{(0,y): y\in\lie g\}$), then $\lie
g_{\ad}$ is an abelian Lie ideal in $\lie g\ltimes\lie g_{\ad}$. Given~$x\in\lie g$,
set ~$(x)_{\ad}=(0,x)\in\lie g_{\ad}$.

Define a $\bz_+$-grading on $\lie g\ltimes \lie g_{\ad}$ by
requiring the elements of $\lie g$ to have degree zero and elements
of $\lie g_{\ad}$ to have degree one. Then the universal enveloping
algebra  $\bu(\lie g\ltimes\lie g_{\ad})$ is a $\bz_+$-graded
algebra and as a trivial consequence of the PBW theorem, there is
an isomorphism of vector spaces $$\bu(\lie g\ltimes\lie
g_{\ad})\cong \bs(\lie g)\tensor \bu(\lie g),$$
where $\bs(\lie g)$ is the symmetric algebra of~$\lie g$.

\subsection{}\label{MR.70} Let $\mathcal G_2$ be the category whose objects are
 $\bz_+$-graded $\lie g\ltimes\lie g_{\ad}$-modules with finite-dimensional graded pieces and where the
 morphisms are $\lie g\ltimes\lie g_{\ad}$-module maps which preserve the grading.
 In other words a $\lie g\ltimes\lie g_{\ad}$-module $V$ is an object of $\mathcal G_2$
 if and only if
\begin{gather*}
V=\bigoplus_{k\in\bz_+} V[k],\qquad\dim V[k]<\infty,\\
 \lie g V[k]\subset V[k],\qquad \lie g_{\ad }V[k]\subset
 V[k+1],
\end{gather*}
and if $V,W\in\Ob\mathcal G_2$, then
$$
\Hom_{\mathcal G_2}(V,W)=\{f\in\Hom_{\lie g\ltimes\lie g_{\ad}}(V,W): f(V[k])\subset W[k]\}.
$$
For $r\in\bz_+$, let $\tau_r$ be the grading shift functor on $\mathcal G_2$: thus $\tau_rV$ has the same $\lie g\ltimes\lie g_{\ad}$-module structure as $V$ but the grading is uniformly shifted by $r$.
Given a $\lie g$-module $V$, define $\ev_r V\in\mathcal G_2$  by $$(\ev_r V)[s]=\begin{cases} 0,& s\ne r,\\ V,& s=r,\end{cases} \qquad
(x,y)(v)=xv,\quad x,y\in\lie g, v\in V
$$
and observe that $\ev_r V=\tau_r\ev_0V$. The following is easily checked.
\begin{lem}\label{simpleg2} A simple object in $\mathcal G_2$ is isomorphic to $\ev_r V(\lambda)$ for some $r\in\bz_+$ and $\lambda\in P^+$. Moreover if $\nu,\mu\in P^+$ and $p,s\in\bz_+$,  we have $$\ev_p V(\nu)\cong \ev_s V(\mu)\iff p=s, \nu=\mu.
$$
Equivalently, the isomorphism classes of simple objects in $\mathcal G_2$ are indexed by the set
$\Lambda=P^+\times\bz_+.$ \qed \end{lem}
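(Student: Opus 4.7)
The plan is to exploit the $\bz_+$-grading to show that any simple object $V\in\mathcal G_2$ must be concentrated in a single graded degree, and then reduce to the complete reducibility of finite-dimensional $\lie g$-modules recalled in~\ref{MR.40}. The key observation driving the whole argument is that, because $\lie g_{\ad}$ strictly raises the degree while $\lie g$ preserves it, the tail $V_{\geq s}:=\bigoplus_{k\geq s}V[k]$ is a graded $\lie g\ltimes \lie g_{\ad}$-submodule of $V$ for every $s\in\bz_+$.

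Concretely, first I would let $V$ be a simple object of $\mathcal G_2$ and let $r\in\bz_+$ be minimal with $V[r]\ne 0$. Since $V_{>r}$ is a proper graded submodule by the observation above, simplicity forces $V_{>r}=0$, so $V=V[r]$ is concentrated in degree~$r$. In particular, $\lie g_{\ad}\cdot V[r]\subset V[r+1]=0$, so $\lie g_{\ad}$ acts by zero on~$V$. Thus the $\lie g\ltimes\lie g_{\ad}$-module structure on $V$ is completely determined by its $\lie g$-module structure together with the information that it sits entirely in degree~$r$.

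Next I would argue that $V[r]$ is simple as a $\lie g$-module. Since $V[r]$ is a finite-dimensional $\lie g$-module by assumption, \ref{MR.40} gives a decomposition $V[r]=\bigoplus_j W_j$ into simple $\lie g$-submodules; but as $\lie g_{\ad}$ acts by zero each summand $\ev_r W_j$ is a graded $\lie g\ltimes\lie g_{\ad}$-submodule of~$V$, so simplicity of $V$ forces there to be exactly one summand. Thus $V[r]\cong V(\lambda)$ for a unique $\lambda\in P^+$, and $V\cong \ev_r V(\lambda)$.

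For the uniqueness statement, suppose $f\colon \ev_p V(\nu)\to \ev_s V(\mu)$ is an isomorphism in~$\mathcal G_2$. Because $f$ preserves the grading and the source is concentrated in degree~$p$ while the target is concentrated in degree~$s$, we must have $p=s$; and then $f$ is an isomorphism of $\lie g$-modules between $V(\nu)$ and $V(\mu)$, forcing $\nu=\mu$ by the classification in~\ref{MR.40}. The converse is trivial. There is no genuine obstacle here; the only thing to be careful about is recognising that both $V_{>r}$ and the isotypic components at degree~$r$ are automatically graded submodules, which is immediate from the degree conventions in~\ref{MR.60}.
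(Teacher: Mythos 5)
Your argument is correct and complete: the paper states this lemma without proof (``easily checked''), and your reasoning --- that the tails $V_{\ge s}$ are graded submodules forcing a simple object to be concentrated in one degree with $\lie g_{\ad}$ acting by zero, followed by complete reducibility over $\lie g$ --- is exactly the standard verification the authors intend. No gaps.
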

Given $(\lambda,r)\in \Lambda$, set
$$[V:\ev_r V(\lambda)]=\dim \Hom_{\lie g}(V(\lambda), V[r]).$$ If $V$
is finite-dimensional, then $[V: \ev_r V(\lambda)]$ is just the
multiplicity of $\ev_r V(\lambda)$ in  a Jordan-Holder series for $V$.
Define the graded character $\ch_t$ of $V\in\Ob{\mathcal G_2}$ by $$\ch_t V=\sum_{r\in\bz_+}t^r\ch V[r]=\sum_{(\lambda,r)\in\Lambda} t^r [V:\ev_r V(\lambda)]\ch V(\lambda)
$$
and observe that this is a well defined element of $\bz[\lie h^*][[t]]$. Clearly, $\ch_t \tau_r V=t^r \ch_t V$.

 \subsection{}\label{ggammadef}\label{MR.80} Given any subset $\Gamma$ of $\Lambda$, let $\mathcal G_2[\Gamma]$ be the full subcategory of $\mathcal G_2$ defined by, \begin{equation}\label{ggamma}V\in\Ob\mathcal G_2[\Gamma]\iff [V:\ev_r V(\lambda)]\ne 0\implies (\lambda,r)\in\Gamma\end{equation}
Lemma~\ref{MR.70} implies that $\mathcal G_2=\mathcal G_2[\Lambda]$ and also that  the isomorphism classes of simple objects in $\mathcal G_2[\Gamma]$ are indexed by elements of $\Gamma$. Given $V\in\Ob \mathcal G_2$, denote by $V^\Gamma$ the maximal quotient of $V$ which lies in $\mathcal G_2[\Gamma]$. It is standard that $V^\Gamma$ is well--defined and it is possible that $V^\Gamma$ is zero. It is clear from the definition that if $\Gamma'\subset\Gamma$, then $V^{\Gamma'}$ is a quotient of $V^\Gamma$ and also that if $V$ is a projective object of  $\mathcal G_2[\Gamma]$ then $V^{\Gamma'}$ is a projective object of $\mathcal G_2[\Gamma']$.

Given $\Gamma\subset \Lambda$ and $r\in\bz_+$, we set $\tau_r \Gamma=\{ (\mu,r+s)\,:\, (\mu,s)\in\Gamma\}$. Clearly, if $V\in\Ob\mathcal G_2[\Gamma]$ then
$\tau_r V\in\Ob\mathcal G_2[\tau_r\Gamma]$.

 \subsection{}\label{projLambda} The category $\mathcal G_2[\Gamma]$ is in general not semisimple and one of the goals of this paper is to understand the structure of
 the projective covers of simple objects in $\mathcal G_2[\Gamma]$ for suitable subsets $\Gamma$. We recall some results from \cite{CG} and \cite{CG1}. For $(\lambda,r)\in P^+$, set
 \begin{equation}\label{projLambda.20}
 P(\lambda,r)=\bu(\lie
g\ltimes \lie g_{\ad})\otimes_{\bu(\lie g)}\ev_r V(\lambda).
\end{equation} The following was proved in \cite[Proposition~2.2]{CG1}.
\begin{prop}Let $(\lambda,r)\in\Lambda$.
\begin{enumerate}[{\rm(i)}]
\item\label{projLambda.i}  The object $P(\lambda,r)$ is the projective cover   in $\mathcal G_2$  of its unique irreducible quotient $\ev_rV(\lambda)$.
Moreover, the kernel of the canonical morphism $P(\lambda,r)\to \ev_r V(\lambda)$ is generated by $P(\lambda,r)[r+1]$.
\item\label{projLambda.ii} For $(\mu,s)\in\Lambda$, we have
$$
[P(\lambda,r):\ev_sV(\mu)]=\dim\Hom_{\lie g}(\bs^{s-r}(\lie
g)\otimes V(\lambda), V(\mu)).
$$
\item\label{projLambda.iii}  $P(\lambda,r)$ is
the $\lie g\ltimes\lie g_{\ad}$-module generated by the element
$p_{\lambda,r}=1\otimes v_\lambda$ with defining relations:$$\lie n^+ p_{\lambda,r} =0,\ \
hv_\lambda=\lambda(h)p_{\lambda,r}, \qquad
(x^-_{i})^{\lambda(h_i)+1}p_{\lambda,r}=0,$$ for all $h\in\lie h$
and $i\in I$. \qed
\end{enumerate}
\end{prop}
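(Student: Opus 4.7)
The plan is to unwind the induced-module construction and apply Frobenius reciprocity, handling the $\bz_+$-grading carefully throughout. First I would use the PBW decomposition of $\bu(\lie g\ltimes\lie g_{\ad})$ from~\ref{MR.60}: as a right $\bu(\lie g)$-module it is free with basis $\bs(\lie g_{\ad})$. Consequently, as a $\lie g$-module (via the left action of the degree-zero copy of~$\lie g$ acting diagonally), we have a graded isomorphism
$$P(\lambda,r)\cong \bs(\lie g)\tensor V(\lambda),$$
where $\bs^k(\lie g)\tensor V(\lambda)$ sits in degree $r+k$. This shows $P(\lambda,r)\in\Ob\mathcal G_2$ since each graded piece is finite-dimensional, and it gives an explicit description of each graded piece as a $\lie g$-module.

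For part~\eqref{projLambda.i}, projectivity follows from the graded Frobenius reciprocity
$$\Hom_{\mathcal G_2}(P(\lambda,r),W)\cong \Hom_{\lie g}(V(\lambda),W[r]),\qquad W\in\Ob\mathcal G_2,$$
together with the semisimplicity of $\mathcal F(\lie g)$ recalled in~\ref{MR.40}, which makes the right-hand side exact in~$W$. The generator $p_{\lambda,r}=1\tensor v_\lambda$ is a highest-weight vector of weight $\lambda$ sitting in degree~$r$, so by construction $P(\lambda,r)$ is cyclic and its only simple quotient is $\ev_r V(\lambda)$; this makes it the projective cover. The statement about the kernel follows from the identification above: the map $P(\lambda,r)\to \ev_rV(\lambda)$ restricts to an isomorphism on the degree-$r$ piece $1\tensor V(\lambda)$, and every element of higher degree lies in $\bu(\lie g\ltimes\lie g_{\ad})\cdot P(\lambda,r)[r+1]$ because the grading is generated by $\lie g_{\ad}$, which raises degree by exactly one.

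Part~\eqref{projLambda.ii} is then a direct character computation: $P(\lambda,r)[s]\cong \bs^{s-r}(\lie g)\tensor V(\lambda)$ as $\lie g$-modules (with the convention that $\bs^{s-r}(\lie g)=0$ when $s<r$), so
$$[P(\lambda,r):\ev_sV(\mu)]=\dim\Hom_{\lie g}(V(\mu),\bs^{s-r}(\lie g)\tensor V(\lambda))=\dim\Hom_{\lie g}(\bs^{s-r}(\lie g)\tensor V(\lambda),V(\mu)),$$
using the self-duality of irreducibles of $\lie g$ combined with complete reducibility. For part~\eqref{projLambda.iii}, observe that $V(\lambda)$ is by definition (see~\ref{MR.40}) the quotient of $\bu(\lie g)$ by the left ideal generated by $\lie n^+$, the elements $h-\lambda(h)$, and $(x_i^-)^{\lambda(h_i)+1}$; tensoring over $\bu(\lie g)$ translates these into exactly the stated relations on $p_{\lambda,r}$, and no further relations appear because the induction functor is exact on the left-regular module. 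The only subtle point, which is the main thing to verify carefully, is compatibility of the grading with the Frobenius reciprocity isomorphism; this reduces to noting that $\ev_rV(\lambda)$ is concentrated in degree~$r$, so any $\mathcal G_2$-morphism from $P(\lambda,r)$ is determined by its value on~$p_{\lambda,r}$, which must land in~$W[r]$.
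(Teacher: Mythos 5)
Your argument is correct and complete: the PBW identification $P(\lambda,r)\cong \bs(\lie g_{\ad})\otimes V(\lambda)$ with $\bs^k$ in degree $r+k$, graded Frobenius reciprocity plus semisimplicity of $\mathcal F(\lie g)$ for projectivity, the cyclicity argument for the unique simple quotient and the kernel, the multiplicity computation for~(ii), and freeness of $\bu(\lie g\ltimes\lie g_{\ad})$ over $\bu(\lie g)$ for the presentation in~(iii). The paper itself gives no proof of this proposition --- it is quoted verbatim from \cite[Proposition~2.2]{CG1} --- and your route is the standard one used there; the only cosmetic point is that in~(ii) you do not need self-duality of $V(\mu)$, since complete reducibility of $\bs^{s-r}(\lie g)\otimes V(\lambda)$ already makes the two Hom-spaces have equal dimension.
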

One way of summarizing the first main result of this paper is the following: to give an analog of the preceding proposition for the projective covers of simple objects of $\mathcal G_2[\Gamma]$. As we have remarked in  Section~\ref{ggammadef}, the module $P(\lambda,r)^\Gamma$ is projective in $\mathcal G_2[\Gamma]$.  If $(\lambda,r)\in\Gamma$ then $P(\lambda,r)^\Gamma$ is  projective with unique irreducible quotient $\ev_r V(\lambda)$. In particular, $P(\lambda,r)^\Gamma$ is indecomposable and is the projective cover of $\ev_r V(\lambda)$. In other words, one has the analog of  Proposition~\ref{projLambda}\eqref{projLambda.i}  for all subsets $\Gamma$ of $\Lambda$.

\subsection{} The analog of Proposition \ref{projLambda}\eqref{projLambda.ii} was also studied in~\cite{CG} and~\cite{CG1}. This time the result is not true for an arbitrary subset of $\Lambda$ and one way of identifying subsets for which this remained true was to introduce a strict partial order on $\Lambda$ and to consider interval closed subsets in this order. To explain this, given~$(\lambda,r), (\mu,s)\in\Lambda$,  we say that
$(\mu,s)$ covers $(\lambda,r)$ if and only if $s=r+1$
and~$\mu-\lambda\in R\,\sqcup\{0\}$. It follows immediately that for
any~$(\mu,s)\in\Lambda$ the set of~$(\lambda,r)\in\Lambda$ such
that~$(\mu,s)$ covers $(\lambda,r)$ is finite. Let $\preccurlyeq$ be
the unique partial order on $\Lambda$ generated by this cover
relation. A subset $\Gamma$ of
$\Lambda$ is called interval closed if for all
$\gamma\preccurlyeq\gamma'\in\Gamma$, we have
$[\gamma,\gamma']:=\{\gamma''\in \Lambda\,:\,\gamma\preccurlyeq\gamma''\preccurlyeq\gamma'\}\subset \Gamma$. The following result is a straightforward reformulation of~\cite[Propositions~2.6 and~2.7]{CG}.
\begin{prop}\label{analogii} Let $\Gamma$ be an interval closed subset of $\Lambda$ and let $(\lambda,r)\in
\Gamma$. Then
\begin{equation*}
[P(\lambda,r)^
\Gamma:\ev_s V(\mu)]=\begin{cases}\dim\Hom_{\lie g}(\bs^{s-r}(\lie
g)\otimes V(\lambda), V(\mu)),& (\mu,s)\in\Gamma,\\ 0,&(\mu,s)\notin\Gamma.\mskip110mu\mbox{\qedsymbol}\end{cases}
\end{equation*}
\end{prop}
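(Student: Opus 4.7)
The plan is to establish the equality in two cases. The case $(\mu,s)\notin\Gamma$ is immediate: since $P(\lambda,r)^\Gamma\in\Ob\mathcal{G}_2[\Gamma]$, no simple subquotients indexed outside~$\Gamma$ can appear. The substance of the proposition lies in the case $(\mu,s)\in\Gamma$, where the upper bound inherited from Proposition~\ref{projLambda}\eqref{projLambda.ii} must be shown to be attained.

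My approach would be to identify the kernel $N$ of $P(\lambda,r)\twoheadrightarrow P(\lambda,r)^\Gamma$ as the $\lie g\ltimes\lie g_{\ad}$-submodule generated by all $\lie g$-isotypic components $P(\lambda,r)[t]^{V(\nu)}$ with $(\nu,t)\notin\Gamma$. One direction of this identification is forced, since the quotient must lie in $\mathcal{G}_2[\Gamma]$; for the reverse, one notes that after removing these isotypic pieces in each degree, no simple subquotient indexed outside~$\Gamma$ remains, so that smaller quotient is already in $\mathcal{G}_2[\Gamma]$. With this description of $N$, the proof reduces to showing that the $V(\mu)$-isotypic component of $N[s]$ is trivial whenever $(\mu,s)\in\Gamma$.

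The heart of the argument is a chain construction inside $(\Lambda,\preccurlyeq)$. Suppose, toward a contradiction, that $V(\mu)$ appears in $N[s]$. Then this occurrence must come from the degree-$s$ piece of the submodule generated by some $P(\lambda,r)[t]^{V(\nu)}$ with $(\nu,t)\notin\Gamma$ and $r\le t\le s$. Since $\lie g_{\ad}$ is abelian and raises the grading by one, that piece is a $\lie g$-quotient of $\bs^{s-t}(\lie g)\otimes V(\nu)^{\oplus m}$ for some $m$, so $V(\mu)$ occurs as a summand of $\lie g^{\otimes(s-t)}\otimes V(\nu)$. Decomposing this iterated tensor product one factor at a time, and using that every simple summand of $\lie g\otimes V(\sigma)$ has highest weight of the form $\sigma+\beta$ with $\beta\in R\sqcup\{0\}$, one extracts a sequence $\nu=\nu_0,\nu_1,\ldots,\nu_{s-t}=\mu$ with $\nu_{j+1}-\nu_j\in R\sqcup\{0\}$, so that $(\nu_{j+1},t+j+1)$ covers $(\nu_j,t+j)$ for each $j$. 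Applying the same reasoning between $(\lambda,r)$ and $(\nu,t)$, which appears as an isotypic summand of $P(\lambda,r)[t]$ by hypothesis, produces a cover chain from $(\lambda,r)$ to $(\nu,t)$. Concatenation places $(\nu,t)$ in the interval $[(\lambda,r),(\mu,s)]$, whence interval-closedness of $\Gamma$ forces $(\nu,t)\in\Gamma$, a contradiction.

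The main obstacle is the chain construction, specifically verifying that the iterated tensor-product decomposition really extracts a genuine cover chain at every step rather than merely a telescoping weight inequality; the correct bookkeeping requires selecting at each stage an isotypic component of the inner factor through which the chosen copy of $V(\mu)$ descends. The remaining ingredients—the identification of $N$, the self-duality converting $\dim\Hom_{\lie g}(V(\mu),\bs^{s-r}(\lie g)\otimes V(\lambda))$ into the form displayed in the statement, and the observation that $\lie g$ preserves $\lie g$-isotypic components within each graded piece of $P(\lambda,r)$—are routine.
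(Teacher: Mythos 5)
Your argument is correct. Note that the paper itself does not prove this proposition but simply cites \cite[Propositions~2.6 and~2.7]{CG}, so there is no in-text proof to compare against; your reduction --- identifying the kernel $N$ of $P(\lambda,r)\twoheadrightarrow P(\lambda,r)^\Gamma$ as the submodule generated by the isotypic components indexed outside $\Gamma$, and then killing the $V(\mu)$-isotypic part of $N[s]$ for $(\mu,s)\in\Gamma$ by a cover-chain argument --- is essentially the mechanism underlying the cited result. The one point you flag as the ``main obstacle'' is genuinely the crux, but it is settled by the classical bound $[\,V\otimes V(\sigma):V(\tau)\,]\le\dim V_{\tau-\sigma}$ applied with $V=\lie g$ (so that every constituent of $\lie g\otimes V(\sigma)$ has highest weight $\sigma+\beta$ with $\beta\in R\sqcup\{0\}$, exactly the cover relation); peeling off one tensor factor of $\lie g^{\otimes(s-t)}\otimes V(\nu)$ at a time and choosing at each stage a constituent through which the fixed copy of $V(\mu)$ factors then yields the required chain, and likewise for the chain from $(\lambda,r)$ to $(\nu,t)$ using $P(\lambda,r)[t]\cong\bs^{t-r}(\lie g)\otimes V(\lambda)$. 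With interval closure this gives the contradiction, and Proposition~\ref{projLambda}\eqref{projLambda.ii} finishes the count.
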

 \subsection{}\label{MR.100} We now state one of  the main results of this paper. The first result   establishes the analog of  Proposition~\ref{projLambda}
 by imposing a further restriction on $\Gamma$. Given $\lambda\in P^+$ and any subset $S$ of $R$, define  $\Gamma(\lambda,S)\subset\Lambda$ by
 $$\Gamma(\lambda,S)=\{ (\mu,r)\in \Lambda\,:\,
\mu=\lambda-\sum_{\beta\in S} n_\beta \beta,\,n_\beta\in\bz_+,\,\sum_{\beta\in S}n_\beta=r\},$$ and note that the subset $\Gamma(\lambda,S)$ need not be interval closed in general.
 For $(\mu,r)\in\Gamma$, let $p_{\mu,r}^\Gamma$ be the image of $p_{\mu,r}$ in $P(\mu,r)^\Gamma$.
\begin{thm} \label{thm1} Let $\Psi\subset R^+$ be such that
$$
\Psi=\{ \alpha\in R\,:\, (\alpha,\xi)=\max_{\beta\in R} (\beta,\xi)\},\qquad\text{for some~$\xi\in P^+$},
$$
and let $\Gamma=\Gamma(\lambda,\Psi)$ for some~$\lambda\in P^+$. Then $\Gamma$ is a finite interval closed subset of $\Lambda$ and for all
 $(\mu,r)\in\Gamma$ the module   $P(\mu,r)^\Gamma$ is   finite-dimensional and is the projective cover of $\ev_r V(\mu)$ in $\mathcal G_2[\Gamma]$. Moreover, it is  generated by the element $p_{\mu,r}^\Gamma$ with defining relations,
\begin{gather}\label{rel1}\lie n^+\  p^\Gamma_{\mu,r} =0,\qquad
hp^\Gamma_{\mu,r}=\mu(h)p^\Gamma_{\mu,r},\quad h\in\lie h,\qquad
(x^-_{i})^{\mu(h_i)+1} p^\Gamma_{\mu,r}=0,\quad  i\in I,\\
\label{rel2} \lie n^+_{\ad} p_{\mu,r}^\Gamma=0=\lie h_{\ad} p_{\mu,r}^\Gamma,\quad
(x^-_\alpha)_{\ad}\  p_{\mu,r}^\Gamma=0,\quad\alpha\in R^+\setminus\Psi.\end{gather}  \end{thm}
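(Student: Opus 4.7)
The plan is to leverage the linear functional $f:\Lambda\to\bz$ defined by $f(\mu,r) := kr + (\mu,\xi)$ where $k = \max_{\beta\in R}(\beta,\xi)$. By construction $f$ is identically $(\lambda,\xi)$ on $\Gamma$, and $f$ is weakly increasing along the cover relation: if $(\tau,s)\prec(\nu,s+1)$ with $\nu - \tau = \delta \in R\sqcup\{0\}$, then $f(\nu,s+1) - f(\tau,s) = k + (\delta,\xi) \geq 0$, with equality iff $\delta\in -\Psi$. This single observation drives both the finiteness of $\Gamma$ (since $(\mu,\xi) \geq 0$ for $\mu,\xi\in P^+$ forces $r \leq (\lambda,\xi)/k$, and only finitely many dominant $\mu$ arise for each $r$) and its interval-closedness (constancy of $f$ on any chain between two elements of $\Gamma$ forces every cover step along the chain to subtract an element of $\Psi$, placing the intermediate points in $\Gamma$). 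Granting Step~1, projectivity of $P(\mu,r)^\Gamma$ and its identification as the projective cover of $\ev_r V(\mu)$ follow from the discussion after Proposition~\ref{projLambda}, while finite-dimensionality is immediate from Proposition~\ref{analogii} applied to the finite interval-closed set $\Gamma$.

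For the relations, \eqref{rel1} descends from Proposition~\ref{projLambda}(iii). For \eqref{rel2}, let $X$ be any of the listed elements of $\lie g_{\ad}$, so $X$ has weight $\delta\in R\sqcup\{0\}$ with $\delta$ either a positive root, zero, or $-\alpha$ for some $\alpha\in R^+\setminus\Psi$. The image $Xp_{\mu,r}^\Gamma$ has weight $\mu+\delta$ at grade $r+1$, and for it to be nonzero it must occur as a weight of some $V(\mu')$ with $(\mu',r+1)\in\Gamma$, i.e.\ $\mu' - \mu - \delta \in Q^+$. Pairing with $\xi$ yields $(\mu'-\mu-\delta,\xi) = -k - (\delta,\xi)$, which is strictly negative in each of the three cases (using $(\alpha,\xi)<k$ for $\alpha\notin\Psi$), whereas elements of $Q^+$ pair non-negatively with $\xi\in P^+$. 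The resulting contradiction forces $Xp_{\mu,r}^\Gamma = 0$.

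To show the relations are defining, let $\tilde P$ be the module presented by \eqref{rel1} and \eqref{rel2} with the canonical surjection $\tilde P\twoheadrightarrow P(\mu,r)^\Gamma$. Combining the relations gives $\lie m_{\ad}\tilde p = 0$ for $\lie m := \lie b\oplus\bigoplus_{\alpha\in R^+\setminus\Psi}\lie g_{-\alpha}$; a short bracket calculation using $(\alpha,\xi)<k$ for $\alpha \notin \Psi$ and $(\gamma,\xi)\geq 0$ for $\gamma\in R^+$ to track weights shows that $\lie m$ is a $\lie b$-submodule of $\lie g$ under the adjoint action. Consequently $\lie b^* := \lie b\oplus\lie m_{\ad}$ is a Lie subalgebra of $\lie g\ltimes\lie g_{\ad}$, and setting $\lie a_- := \bigoplus_{\beta\in\Psi}\lie g_{-\beta}$, the PBW decomposition $\lie g\ltimes\lie g_{\ad} = \lie n^-\oplus\lie b^*\oplus\lie a_{-,\ad}$ realizes $\tilde P$ as a quotient of the generalized Verma module $\bu(\lie g\ltimes\lie g_{\ad})\otimes_{\bu(\lie b^*)}\bc_\mu$ modulo the $\lie g$-integrability relations $(x^-_i)^{\mu(h_i)+1}\tilde p=0$.

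The \textbf{main obstacle} is then the character matching $\ch_t\tilde P = \ch_t P(\mu,r)^\Gamma$. For this one analyzes each graded piece $\tilde P[r+k]$ and shows it is a quotient of $V(\mu)\otimes\bs^k(\lie a_-)$: the $\lie n^+$-action on the canonical spanning set $\bs(\lie a_-)\tilde p$ is controlled by the induced $\lie b$-action on $\lie a_- \cong \lie g/\lie m$, while the integrability relations propagate through the PBW expansion to kill precisely those $\lie g$-subquotients whose highest weight $(\mu',r+k)$ lies outside $\Gamma$. Combining this upper bound with the explicit multiplicity formula of Proposition~\ref{analogii} forces equality of characters, promoting the surjection $\tilde P\twoheadrightarrow P(\mu,r)^\Gamma$ to an isomorphism.
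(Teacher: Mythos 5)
Your reduction of interval-closedness, finiteness, and the relations~\eqref{rel1},~\eqref{rel2} to the functional $f(\mu,r)=kr+(\mu,\xi)$ is correct and genuinely different from the paper's route: the paper instead proves a chain-surgery lemma (Lemma~\ref{PT1.10}, showing $(\lambda,0)\preccurlyeq(\mu,r)$ for all $(\mu,r)\in\Gamma(\lambda,R)$ by repeatedly replacing a pair of roots $\beta_1,\beta_s$ by $\beta_1+\alpha_i,\beta_s-\alpha_i$) and then invokes the rigidity property~\eqref{rigid}; and it verifies~\eqref{rel2} by a case analysis on roots of maximal/minimal height. Your observation that $f$ is constant on $\Gamma(\lambda,\Psi)$ and weakly increasing along covers, with equality exactly for steps in $-\Psi$, packages all of that into one computation, and your weight-pairing argument for $Xp^{\Gamma}_{\mu,r}=0$ is a clean substitute for the paper's Lemma in Section~\ref{PT1.40}. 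Those parts I would accept as written.

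The gap is in the final step, and it sits exactly where you flag the ``main obstacle.'' The assertion that the integrability relations ``kill precisely those $\lie g$-subquotients whose highest weight lies outside $\Gamma$'' is the entire content of what remains to be proved, not something that follows from the PBW setup; and the proposed character comparison cannot close as stated, because the upper bound $V(\mu)\otimes \bs^k(\lie a_-)$ is strictly larger than $P(\mu,r)^\Gamma[r+k]$ in general (already for $\lie g=\lie{sl}_2$, $\mu=m\omega_1$, $k=1$ it gives $m+1$ versus $m-1$), so matching it against Proposition~\ref{analogii} forces nothing. The paper avoids any character computation: it shows (Proposition~\ref{defining}) that $\tilde P=\bp_\Psi(\mu,r)$ lies in $\Ob\mathcal G_2[\Gamma]$ by proving that every $\lie n^+$-singular vector of $\tilde P$ has a nonzero projection onto $\bu((\lie n^-_\Psi)_{\ad})\bop_{\mu,r}$ --- this uses the integrability statement of Lemma~\ref{PT1.70}, namely that a singular vector cannot lie in $\lie n^-$ applied to higher weight spaces --- and the weights occurring in $\bu((\lie n^-_\Psi)_{\ad})\bop_{\mu,r}$ are visibly of the form required for $\Gamma(\mu,\Psi)$. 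Once $\tilde P\in\Ob\mathcal G_2[\Gamma]$ is known, the surjection $\tilde P\twoheadrightarrow P(\mu,r)^\Gamma$ splits because the target is projective in $\mathcal G_2[\Gamma]$, and indecomposability of $\tilde P$ (it has the unique simple quotient $\ev_r V(\mu)$) forces the kernel to vanish; no multiplicities are ever computed. What your write-up is missing is precisely this control of singular vectors via integrability, and without it the last step does not go through.
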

 \subsection{}\label{PT1.110} The  following Corollary to Theorem \ref{thm1} is immediate.
  \begin{cor}\label{corthm1} Let $(\mu,r)\in\Gamma(\lambda,\Psi)$. Then $$\tau_rP(\mu,0)^{\Gamma(\mu,\Psi)}\cong P(\mu,r)^\Gamma.$$
  \end{cor}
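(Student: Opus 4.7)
The strategy is to observe that Theorem \ref{thm1} presents $P(\mu,r)^\Gamma$ by generators and relations that depend only on the pair $(\mu,\Psi)$ together with the degree $r$ of the cyclic generator, and not on $\lambda$. Thus the isomorphism will follow by comparing the two presentations, one being a pure grading shift of the other.

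First, I would verify that Theorem \ref{thm1} is applicable on both sides. By hypothesis, $\Gamma=\Gamma(\lambda,\Psi)$ satisfies the theorem's assumption, so $P(\mu,r)^\Gamma$ is cyclically generated by $p_{\mu,r}^\Gamma$ in degree $r$ with defining relations \eqref{rel1}--\eqref{rel2}. For the other side, since $(\mu,r)\in\Gamma\subset\Lambda$ we automatically have $\mu\in P^+$, and the set $\Psi$ was defined as $\{\alpha\in R: (\alpha,\xi)=\max_{\beta\in R}(\beta,\xi)\}$ for some fixed $\xi\in P^+$, a condition that makes no reference to $\lambda$. Hence $\Gamma(\mu,\Psi)$ also satisfies the hypothesis of Theorem \ref{thm1}, and the theorem presents $P(\mu,0)^{\Gamma(\mu,\Psi)}$ as the cyclic module on $p_{\mu,0}^{\Gamma(\mu,\Psi)}$ in degree $0$ subject to exactly the same list \eqref{rel1}--\eqref{rel2}, only with $r$ replaced by $0$.

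Applying the grading-shift functor $\tau_r$, the generator of $\tau_r P(\mu,0)^{\Gamma(\mu,\Psi)}$ now sits in degree $r$ and satisfies the very same relations (the relations are homogeneous, and since the shift moves all degrees uniformly, the ideal of relations is unchanged). Consequently $\tau_r P(\mu,0)^{\Gamma(\mu,\Psi)}$ and $P(\mu,r)^\Gamma$ admit identical presentations as graded $\lie g\ltimes\lie g_{\ad}$-modules, hence are isomorphic. It is compatible with the two ambient categories because, directly from the definition, one checks $\tau_r \Gamma(\mu,\Psi)\subseteq \Gamma(\lambda,\Psi)=\Gamma$, so both modules lie in $\mathcal G_2[\Gamma]$.

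There is not really a serious obstacle here; the only thing to be careful about is to ensure that the hypothesis of Theorem \ref{thm1} is genuinely $\lambda$-free, so that it transfers from $\Gamma(\lambda,\Psi)$ to $\Gamma(\mu,\Psi)$ without extra verification. Once this is checked, the argument reduces to the observation that the defining relations \eqref{rel1}--\eqref{rel2} for $P(\mu,r)^\Gamma$ involve only $\mu$ and $\Psi$, not $\lambda$, and grading-shifting both sides matches them up.
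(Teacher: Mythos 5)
Your proposal is correct and matches the paper's intent: the paper states the corollary as an immediate consequence of Theorem~\ref{thm1}, precisely because both modules are presented by the generator-and-relations data \eqref{rel1}--\eqref{rel2}, which depend only on $\mu$ and $\Psi$ and on the degree of the cyclic generator (indeed the proof of Proposition~\ref{defining} already records $\tau_r\bp_\Psi(\mu,0)\cong\bp_\Psi(\mu,r)$). Your additional checks that $\Gamma(\mu,\Psi)$ satisfies the hypotheses of Theorem~\ref{thm1} and that $\tau_r\Gamma(\mu,\Psi)\subseteq\Gamma$ are exactly the points one needs, so nothing is missing.
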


\subsection{}\label{MR.120} Our next main result establishes an alternating character formula for the modules $P(\mu,r)^\Gamma$ under the same restrictions on $\Gamma$ as in Theorem \ref{thm1}.
Given $S\subset R^+$, set
$$
\lie n^\pm_S=\bigoplus_{\alpha\in S}\lie g_{\pm\alpha}.
$$
Clearly $\lie n^\pm_S$ is a weight $\lie h$-module and moreover if $k\in\bz_+$ then $\bigwedge^k\lie n^\pm_S$ is also a weight $\lie h$-module.
If  $\Psi\subset R^+$  is an in Theorem~\ref{thm1},
then it is easy to see that $\lie n^\pm_\Psi$ is an (abelian) Lie ideal in $(\lie h\oplus\lie n^\pm)$ and hence $\bigwedge^k \lie n^\pm_\Psi$ is
a $(\lie h\oplus\lie n^\pm)$-module. Moreover, ~$\lie n^\pm$ acts nilpotently on~$\bigwedge^k\lie n^\pm_\Psi$.

\begin{thm}\label{thm2} Let $\Psi=\{ \alpha\in R\,:\, (\alpha,\xi)=\max_{\beta\in R} (\beta,\xi)\}$ for some $\xi\in P^+$.
For all $\lambda\in P^+$, we have
$$
\sum_{(\nu,s)\in\Gamma(\lambda,\Psi)}(-t)^{s} c^\lambda_{\nu,s}\ch_t P(\nu,0)^{\Gamma(\nu,\Psi)}=\ch V(\lambda),
$$
where
$$
 c^\lambda_{\nu,s}=\dim\{ v\in (\bigwedge^s \lie n^-_\Psi)_{\nu-\lambda}\,:\, (x_i^-)^{\nu(h_i)+1}(v)=0,\, \forall\, i\in I\}.
  $$

\end{thm}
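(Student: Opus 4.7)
The plan is to realize the claimed identity as the Euler characteristic of a projective resolution of $\ev_0V(\lambda)$ in $\mathcal G_2[\Gamma]$, where I set $\Gamma:=\Gamma(\lambda,\Psi)$. By Corollary~\ref{corthm1}, $\tau_sP(\nu,0)^{\Gamma(\nu,\Psi)}\cong P(\nu,s)^\Gamma$, so the statement is equivalent to
$$\sum_{(\nu,s)\in\Gamma}(-1)^s c^\lambda_{\nu,s}\,\ch_t P(\nu,s)^\Gamma=\ch\ev_0V(\lambda),$$
which is precisely the alternating sum of graded characters arising from a projective resolution of $\ev_0V(\lambda)$ whose $s$-th term is $\bigoplus_{(\nu,s)\in\Gamma}c^\lambda_{\nu,s}\,P(\nu,s)^\Gamma$. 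The goal is thus to construct such a resolution.

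The construction rests on two observations. First, $\lie n^-_\Psi$ is abelian: if $\alpha,\beta\in\Psi$ and $\alpha+\beta$ were a root, then $(\alpha+\beta,\xi)>\max_{\gamma\in R}(\gamma,\xi)$, contradicting the definition of $\Psi$. Second, Theorem~\ref{thm1} exhibits $P(\lambda,0)^\Gamma$ as the cyclic module on $p^\Gamma_{\lambda,0}$ for which the $\bu(\lie g_{\ad})$-action factors through its quotient $S(\lie n^-_\Psi)$, so that as a graded vector space $P(\lambda,0)^\Gamma$ is a quotient of $S(\lie n^-_\Psi)\otimes V(\lambda)$ by the Serre relations $(x_i^-)^{\lambda(h_i)+1}p^\Gamma_{\lambda,0}=0$. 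Tensoring the classical Koszul complex $\bigwedge^\bullet\lie n^-_\Psi\otimes S(\lie n^-_\Psi)\to\bc$, which is $(\lie h\oplus\lie n^-)$-equivariant, with $V(\lambda)$ and combining with Theorem~\ref{thm1} produces the candidate resolution, whose $s$-th term is built from $\bigwedge^s\lie n^-_\Psi\otimes P(\lambda,0)^\Gamma$ modulo the Serre relations.

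The coefficient $c^\lambda_{\nu,s}$ enters when this $s$-th term is decomposed as a direct sum of projective covers $P(\nu,s)^\Gamma$. A weight vector $v\in(\bigwedge^s\lie n^-_\Psi)_{\nu-\lambda}$ determines a candidate map $P(\nu,s)^\Gamma\to C_{s}$ sending the generator $p^\Gamma_{\nu,s}$ to $v\otimes p^\Gamma_{\lambda,0}$. Among the defining relations of $P(\nu,s)^\Gamma$ listed in Theorem~\ref{thm1}, the $\lie n^+$, $\lie h$, and $(x^-_\alpha)_{\ad}$ relations for $\alpha\in R^+\setminus\Psi$ all hold automatically from the $(\lie h\oplus\lie n^-)$-module structure of $\bigwedge^s\lie n^-_\Psi$ combined with the corresponding relations for $p^\Gamma_{\lambda,0}$; only the Serre relation $(x_i^-)^{\nu(h_i)+1}p^\Gamma_{\nu,s}=0$ requires input, and it translates precisely into $(x_i^-)^{\nu(h_i)+1}v=0$. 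The multiplicity of $P(\nu,s)^\Gamma$ in the $s$-th term is therefore exactly $c^\lambda_{\nu,s}$.

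The main obstacle is verifying exactness of the candidate resolution. I would argue by induction on $s$, reducing at each step to the exactness of the underlying classical Koszul complex of the abelian Lie algebra $\lie n^-_\Psi$ and using Proposition~\ref{analogii} to track the $\lie g$-isotypic multiplicities inside each $P(\nu,s)^\Gamma$. Once exactness is in hand, taking alternating graded characters and invoking Corollary~\ref{corthm1} to rewrite $P(\nu,s)^\Gamma\cong\tau_sP(\nu,0)^{\Gamma(\nu,\Psi)}$ delivers the desired formula.
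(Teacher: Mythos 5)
Your overall strategy---read the identity as the Euler characteristic of a minimal projective resolution of $\ev_0V(\lambda)$ whose $s$-th term is $\bigoplus_\nu c^\lambda_{\nu,s}P(\nu,s)^\Gamma$---is the categorified shadow of what the paper actually does: the paper never builds a resolution, but instead cites from \cite{CG1} the matrix identity $A(t)E(-t)=\mathrm{Id}$ relating the graded Cartan matrix $\bigl(t^{s-r}[P(\mu,r)^\Gamma:\ev_sV(\nu)]\bigr)$ to the graded Ext matrix, computes $\Ext^{s-r}_{\mathcal G_2[\Gamma]}(\ev_rV(\mu),\ev_sV(\nu))\cong\Hom_{\lie g}(V(\nu),\bigwedge^{s-r}\lie g\otimes V(\mu))$ (Proposition~\ref{ext}, again from \cite{CG1}), and identifies these dimensions with $c^\lambda_{\nu,s}$ via PRV and the rigidity property \eqref{rigid} of $\Psi$ (Lemma~\ref{PT2.5}). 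Extracting the $(\lambda,0)$-column and multiplying by $\ch V(\nu)$ then gives the formula. So the numerical content you are after is exactly the content of $A(t)E(-t)=\mathrm{Id}$, and your plan amounts to reproving that identity by exhibiting the resolution explicitly.

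The gap is that the resolution is never actually constructed, and the step you defer---exactness---is precisely the hard homological input, not something that reduces to the classical Koszul complex. Concretely: (a) ``$\bigwedge^s\lie n^-_\Psi\otimes P(\lambda,0)^\Gamma$ modulo the Serre relations'' is not a well-defined object of $\mathcal G_2[\Gamma]$, since $\bigwedge^s\lie n^-_\Psi$ carries only a $(\lie h\oplus\lie n^-)$-action, not a $\lie g$-action; in particular your claim that the $\lie n^+$-relations for $v\otimes p^\Gamma_{\lambda,0}$ ``hold automatically'' has no meaning until the $\lie n^+$-action on the term is defined, and even granting a definition you only get a map \emph{from} $\bigoplus_\nu c^\lambda_{\nu,s}P(\nu,s)^\Gamma$, not an isomorphism onto the term (projectivity of the terms is unproven). (b) More seriously, $P(\lambda,0)^\Gamma$ is a \emph{quotient} of the free module $S(\lie n^-_\Psi)\otimes V(\lambda)$, not free, so tensoring the Koszul resolution of $\bc$ over $S(\lie n^-_\Psi)$ against it computes $\mathrm{Tor}^{S(\lie n^-_\Psi)}_\bullet(\bc,P(\lambda,0)^\Gamma)$, which for a non-free module is generically nonzero in higher degrees; the assertion that these Tor groups sit exactly where the resolution needs them is equivalent to the Koszulity statements of \cite{CG1} that the paper quotes, so your proposed ``reduction to the exactness of the underlying classical Koszul complex'' either fails or silently re-imports the result you are trying to avoid citing. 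To repair the argument along the paper's lines you would replace the resolution by the purely numerical identity $A(t)E(-t)=\mathrm{Id}$ together with Proposition~\ref{ext} and Lemma~\ref{PT2.5}, and then Corollary~\ref{corthm1} finishes as you indicate.
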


\subsection{}\label{subs:NN conj} Suppose that $\lie g$ is of type $B_n$, $C_n$ or~$D_{n+1}$. Assume that the nodes of the Dynkin diagram are labeled as in \cite{Bo}.  Assume also
 that $\lambda\in P^+$ is such that $\lambda(h_i)=0$, $i\ge n$. Set
 \begin{align*}
&i_\lambda=\max\{i \in I\,:\,\lambda(h_i)>0\}
\\
\intertext{and let}
&\Psi_\lambda=\{\alpha\in R^+\,:\, \epsilon_{i_\lambda}(\alpha)=2\}.
\end{align*}
Then either $\Psi_\lambda=\emptyset$ or $\Psi_\lambda=\{\alpha\in R^+\,:\, (\alpha,\omega_{i_\lambda})=\max_{\beta\in R} (\beta,\omega_{i_\lambda})\}$.
For $1\le i\le i_\lambda$, set $$
\lambda_i=\sum_{i\le k\le i_\lambda}\lambda(h_k),$$ and define the
 the   Jacobi-Trudi determinant $\bh_\lambda$ corresponding to $\lambda$ by
\begin{align*}
  &\bh_\lambda =\det(\boh_{\lambda_i-i+j})_{1\le i,j\le i_\lambda}\in\bz[P],\\
&\boh_k=
\begin{cases}
     \ch V(k\omega_1),& \lie g=B_n,D_{n+1}\\
     \sum_{0\le r\le k/2} \ch V((k-2r)\omega_1),&\lie g=C_n,
    \end{cases}
\end{align*}
where we adopt the convention that $\boh_k=0$ if $k<0$.
The following conjecture is motivated by Theorems \ref{thm1} and \ref{thm2} of this paper along with the conjectures in \cite{M} and \cite{NN1}. We will explain this further in the last section of the paper.
Recall that if  $V\in\Ob\mathcal G_2$ is finite-dimensional then $\ch V$ is just the specialization of $\ch_t V$ at $t=1$.
\begin{conj}\label{cgconj} Let $\lie g$ be of type $B_n$, $C_n$ or~$D_{n+1}$ and let $\lambda\in P^+$ be such that $\lambda(h_i)=0$, $i\ge n$.
 Then $$\ch P(\lambda,0)^{\Gamma(\lambda,\Psi_\lambda)}=\bh_{\lambda},$$ or equivalently by Theorem \ref{thm2}, \begin{equation}\label{nnmcg}
\sum_{(\nu,s)\in\Gamma(\lambda,\Psi_\lambda)}(-1)^{s} c^\lambda_{\nu,s}\bh_\nu=\ch V(\lambda).
\end{equation}
\end{conj}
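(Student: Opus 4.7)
My plan is to focus on proving the alternating identity \eqref{nnmcg}, from which the character formula $\ch P(\lambda,0)^{\Gamma(\lambda,\Psi_\lambda)} = \bh_\lambda$ follows by induction using Theorem \ref{thm2}. Two preliminary facts must be checked: first, that $\Psi_\lambda$ (defined via $\epsilon_{i_\lambda}(\alpha)=2$) coincides with $\{\alpha\in R:(\alpha,\omega_{i_\lambda})=\max_{\beta\in R}(\beta,\omega_{i_\lambda})\}$, so that Theorem \ref{thm2} applies with $\xi=\omega_{i_\lambda}$; and second, that every $(\nu,s)\in\Gamma(\lambda,\Psi_\lambda)$ satisfies $\Psi_\nu=\Psi_\lambda$, which uses the hypothesis $\lambda(h_i)=0$ for $i\ge n$ together with the constraint $\epsilon_{i_\lambda}(\beta)=2$ for $\beta\in\Psi_\lambda$. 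Both are direct root-system computations in types $B_n$, $C_n$, $D_{n+1}$. With these in hand, Theorem \ref{thm2} at $t=1$ reads
$$\sum_{(\nu,s)\in\Gamma(\lambda,\Psi_\lambda)} (-1)^s c^\lambda_{\nu,s}\, \ch P(\nu,0)^{\Gamma(\nu,\Psi_\nu)} = \ch V(\lambda),$$
and the equivalence of the two forms of the conjecture then follows by downward induction along the finite poset $\Gamma(\lambda,\Psi_\lambda)$.

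To attack \eqref{nnmcg}, I would expand each $\bh_\nu$ via the Jacobi-Trudi determinant using the explicit $\boh_k$ for the type in question; this writes the LHS as a signed combination of products of $\ch V(k\omega_1)$ (or their symmetric-power sums in type $C$), which may be re-expanded in the basis $\{\ch V(\mu)\}_{\mu\in P^+}$ by Pieri-type rules. In parallel, since $\lie n^-_{\Psi_\lambda}$ is an abelian ideal in the opposite Borel, $\bigwedge^\bullet \lie n^-_{\Psi_\lambda}$ carries a natural $\lie h\oplus\lie n^+$-action with an explicit weight-space decomposition. The definition of $c^\lambda_{\nu,s}$ then becomes a combinatorial count of antichains (or signed tableaux) in $\Psi_\lambda$ of weight $\nu-\lambda$ satisfying the $(x_i^-)^{\nu(h_i)+1}$-annihilation conditions. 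The identity \eqref{nnmcg} becomes a combinatorial cancellation between the two signed sums, with $\ch V(\lambda)$ as the lone survivor.

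The main obstacle is precisely this combinatorial cancellation, and a uniform argument across the three classical types seems elusive. Type $B_n$ is the most accessible: Hernandez's proof \cite{Her2} of the Nakai-Nakanishi conjecture, combined with the reformulation of Moura's conjecture \cite{M} identifying specialized minimal affinizations with the projective covers $P(\lambda,0)^{\Gamma(\lambda,\Psi_\lambda)}$, should essentially settle this case. Type $C_n$ is considerably more intricate because $\boh_k=\sum_{0\le r\le k/2}\ch V((k-2r)\omega_1)$ introduces an extra layer of summation that must be disentangled against the $c^\lambda_{\nu,s}$. Type $D_{n+1}$ is delicate because, although the hypothesis $\lambda(h_n)=\lambda(h_{n+1})=0$ rules the spin nodes out of $\supp\lambda$, the set $\Psi_\lambda$ can still include roots reaching into those nodes, so tracking how subtracting such roots moves $\nu$ inside $\Gamma(\lambda,\Psi_\lambda)$ requires separate care. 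A satisfying uniform resolution would likely come from a sign-reversing involution on the expanded LHS of \eqref{nnmcg} that exploits the structure of $\Psi_\lambda$ as the set of $(\cdot,\omega_{i_\lambda})$-maximal roots; constructing such an involution is, I expect, where the substantive difficulty lies.
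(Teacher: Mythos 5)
The statement you are addressing is a \emph{conjecture}: the paper does not prove it in general, and neither does your proposal. Your first two paragraphs correctly set up the reduction --- identifying $\Psi_\lambda$ with a set of $(\cdot,\omega_{i_\lambda})$-maximal roots so that Theorem~\ref{thm2} applies, and using the resulting alternating formula to pass between the two forms of the conjecture --- and this matches what the paper does. But your claimed preliminary fact that every $(\nu,s)\in\Gamma(\lambda,\Psi_\lambda)$ satisfies $\Psi_\nu=\Psi_\lambda$ is false: Lemma~\ref{iinc} only gives $i_\nu\le i_\lambda$, and the inequality is frequently strict (e.g.\ in the $i_\lambda=3$ table the element $(\lambda+\omega_1-\omega_3,1)$ can have $i_\nu=1$). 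Indeed the paper's inductive scheme \emph{relies} on this strict drop: the induction is on $i_\lambda$ and then on $\lambda$ in the order $\le$, precisely because the terms $\bh_\nu$ with $i_\nu<i_\lambda$ are handled by an earlier stage of the induction rather than by the trivial equality $\Psi_\nu=\Psi_\lambda$. More importantly, your third paragraph openly concedes that the central combinatorial cancellation --- the identity \eqref{nnmcg} itself --- is not established, and no sign-reversing involution is constructed. That is the entire content of the conjecture, so the proposal has a genuine gap rather than being a proof.

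For comparison, what the paper actually establishes is only Proposition~\ref{thm3}, i.e.\ the two special cases. For $\lambda=m\omega_i$ it combines Proposition~\ref{krp} (which identifies $P(m\omega_i,0)^{\Gamma(m\omega_i,\Psi_i)}$ with the specialized Kirillov--Reshetikhin module $\varphi^*V_1(\bpi_{i,m})$ as a projective object) with the previously known Jacobi--Trudi character formula for those modules (Kuniba et al., Nakajima, Hernandez, as assembled in \cite{NN1}). For $i_\lambda\le 5$ it substitutes the Koike--Terada determinantal expressions for $\ch V(\lambda)$ into \eqref{nnmcg}, reducing the claim to an identity in the polynomial ring $\bz[\boh_k : k\in\bz_+]$, which is then verified by explicit computation of the coefficients $c^\lambda_{\nu,s}$ (the tables in Section~\ref{maff}) together with a computer check. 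Your appeal to Hernandez's type-$B_n$ result plus Moura's conjecture for that case is closer to the paper's discussion than to a proof: Moura's identification of the specialized minimal affinization with $\bp_{\Psi_\lambda}(\lambda,0)$ is itself only a conjecture in \cite{M}, so it cannot be invoked as an ingredient. If you want to contribute something beyond the paper, the place to work is exactly the involution you describe at the end; as it stands, nothing in the proposal closes that gap.
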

\medspace
 Our final result is the following.
\begin{prop}\label{thm3} The conjecture is true if one of the following holds
\begin{enumerate}[{\rm(i)}]
\item\label{thm3.i} $\lambda=m\omega_i$ for $i\in I$,
\item\label{thm3.ii} $i_\lambda\le 5$.
\end{enumerate}
\end{prop}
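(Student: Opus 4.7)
My plan is to verify the identity~\eqref{nnmcg} by a case analysis, exploiting the explicit structure of $\Psi_\lambda$ and of the abelian ideal $\lie n^-_{\Psi_\lambda}$ in each of the two restricted settings.

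For part~\eqref{thm3.i}, I would first determine when $\Psi_\lambda=\emptyset$. Using the Bourbaki labelling and the standard descriptions of positive roots in types $B_n$, $C_n$, and~$D_{n+1}$, this occurs precisely when no positive root has coefficient~$2$ at $\alpha_{i_\lambda}=\alpha_i$, which singles out a short list of boundary subcases (e.g.\ $\lambda=m\omega_1$ in type $B_n$). In these subcases the sum in~\eqref{nnmcg} collapses to the single term $\bh_\lambda$, and the identity reduces to the classical statement that the Jacobi-Trudi determinant of a rectangular diagram equals $\ch V(m\omega_i)$, which is known in all three types. In the remaining subcases, for instance $\Psi_{m\omega_i}=\{e_a+e_b:1\le a<b\le i\}$ in type $B_n$, the ideal $\lie n^-_{\Psi_\lambda}$ coincides with the nilradical of a maximal parabolic of a classical subalgebra and its exterior powers admit an explicit Kostant-type decomposition into simple $\lie g$-modules; from this decomposition the multiplicities $c^\lambda_{\nu,s}$ can be read off and matched, term by term, against the Laplace expansion of $\bh_\lambda-\ch V(\lambda)$.

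For part~\eqref{thm3.ii} I would argue by induction on $i_\lambda$, with part~\eqref{thm3.i} providing the base step after peeling off the lowest index. Expanding $\bh_\lambda$ along its first row produces a recursion involving determinants $\bh_\mu$ with strictly smaller $i_\mu$, while restricting $\bigwedge^s\lie n^-_{\Psi_\lambda}$ to the Levi subalgebra attached to $\{\alpha_1,\ldots,\alpha_{i_\lambda-1}\}$ gives a parallel recursion for the multiplicities $c^\lambda_{\nu,s}$. For $i_\lambda\le 5$ this reduces the conjecture to a finite list of residual identities that can be checked directly, if necessary with computer algebra assistance.

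The main obstacle is the precise matching between the signed multiplicities $(-1)^s c^\lambda_{\nu,s}$ and the alternating signs produced by the determinantal expansion: extracting an equality of $\lie g$-characters from a formal determinantal identity requires careful bookkeeping of Pieri-type rules, and in type $C_n$ the auxiliary sum defining~$\boh_k$ adds an extra layer that must first be resolved into simple $\lie g$-characters before cancellations become visible.
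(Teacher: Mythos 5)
Your plan for part (i) misses the argument the paper actually uses and, as written, has a genuine gap. The paper does not verify \eqref{nnmcg} combinatorially for $\lambda=m\omega_i$: it identifies $P(m\omega_i,0)^{\Gamma(m\omega_i,\Psi_i)}$ with the $q=1$ specialization of the Kirillov--Reshetikhin module $V(\bpi_{i,m})$ (Proposition \ref{krp}, proved from Theorem \ref{thm1} together with the known presentation \eqref{rel-KR.1}--\eqref{rel-KR.2}), and then invokes the fact that the Nakai--Nakanishi conjecture $\ch V(\bpi_{i,m})=\bh_{m\omega_i}$ is already established for these highest weights by Kuniba et al., Nakajima and Hernandez. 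Your proposed substitute --- decompose $\bigwedge^s\lie n^-_{\Psi_i}$ by a Kostant-type formula and match ``term by term'' against the Laplace expansion of $\bh_{m\omega_i}-\ch V(m\omega_i)$ --- is precisely the hard content of the Kirillov--Reshetikhin character formula; asserting that the matching works is not a proof, and no mechanism for it is offered. (Also, the only instance of $\Psi_\lambda=\emptyset$ in part (i) is $i=1$ in types $B_n$, $D_{n+1}$, where $\bh_{m\omega_1}=\boh_m=\ch V(m\omega_1)$ holds by definition as a $1\times1$ determinant; the ``rectangular Jacobi--Trudi'' statement you appeal to is false for general $i$ in types $B$, $C$, $D$ --- that failure is exactly why the Kirillov--Reshetikhin modules are not irreducible.)

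For part (ii) your outline is closer in spirit to the paper --- both reduce to a finite, machine-checkable family of identities via induction on $i_\lambda$ --- but you are missing the ingredient that makes the check well defined: the Koike--Terada determinantal formulae, which express $\ch V(\lambda)$ itself as a polynomial in the $\boh_k$. With those in hand, both sides of \eqref{nnmcg} live in $\bz[\boh_k : k\in\bz_+]$ and the identity can be verified formally; your first-row Laplace expansion produces minors that need not be of the form $\bh_\mu$ for dominant $\mu$, and you never say how the right-hand side $\ch V(\lambda)$ is to be brought into comparable form. The paper also runs a secondary induction on $\lambda$ with respect to $\le$, using Lemma \ref{iinc} to guarantee $i_\nu\le i_\lambda$ for $(\nu,s)\in\Gamma(\lambda,\Psi_\lambda)$; this is needed because the equivalence of the two formulations of Conjecture \ref{cgconj} via Theorem \ref{thm2} itself presupposes the conjecture for the lower terms in the alternating sum, a point absent from your proposal.
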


\section{Proof of Theorems~\ref{thm1} and~\ref{thm2}} \label{PT1}
Throughout this section we fix $\xi,\lambda\in P^+$ and set
$$
\Psi=\{ \alpha\in R\,:\, (\alpha,\xi)=\max_{\beta\in R} (\beta,\xi)\}\subset R^+
,\quad \Gamma=\Gamma(\lambda,\Psi).$$
The following property of $\Psi$ (cf.~\cite[Lemma~2.3]{CG1}) is crucial for this section.
Suppose that
$$
\sum_{\alpha\in R} m_\alpha \alpha=\sum_{\beta\in\Psi} n_\beta \beta,\qquad m_\alpha,n_\beta\in\bz_+.
$$
Then
\begin{equation}\label{rigid}
\sum_{\beta\in\Psi} n_\beta\le \sum_{\alpha\in R} m_\alpha,\ \ {\rm and}\ \ \sum_{\beta\in\Psi} n_\beta =\sum_{\alpha\in R} m_\alpha \implies\  m_\alpha=0,\ \  \alpha\notin\Psi.
 \end{equation}

\subsection{} \label{PT1.10}
We begin by proving that $\Gamma$ is interval closed.
As the first step, we prove the following Lemma.  It is tempting to think that this result  is clear from the definition of the partial order. As an example, suppose that $\mu=\lambda-\alpha-\beta$ where $\alpha,\beta\in R$ and $\alpha+\beta\notin R\cup\{0\}$,
then one is inclined to believe that $(\lambda,0)\prec(\mu,2)$. However there is a subtlety here, namely the partial order requires the existence of $\nu\in P^+$
such that $(\lambda,0)\prec(\nu,1)\prec(\mu,2)$ which is not obvious, since it could happen very easily that $\lambda-\alpha,\lambda-\beta\notin P^+$.
The proof of this Lemma also shows that for an arbitrary $S$ the set $\Gamma(\lambda,S)$ cannot be expected to be interval closed.
\begin{lem}
Let $\lambda\in P^+$, $S\subset R$. Then
$$ (\mu,r)\in\Gamma(\lambda,S)\implies
(\lambda,0)\preccurlyeq (\mu,r).
$$
\end{lem}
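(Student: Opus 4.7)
The plan is to proceed by induction on~$r$. The base case $r = 0$ forces $\mu = \lambda$, and the conclusion follows from reflexivity of~$\preccurlyeq$.

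For the inductive step $r \geq 1$, I fix the decomposition $\mu = \lambda - \sum_{\beta\in S} n_\beta \beta$ with $\sum_{\beta\in S} n_\beta = r$. I would try to exhibit an intermediate $\nu \in P^+$ such that $(\mu,r)$ covers $(\nu,r-1)$ and the inductive hypothesis produces a chain from $(\lambda,0)$ to~$(\nu,r-1)$. The first attempt would be $\nu = \mu + \beta$ for some $\beta \in S$ with $n_\beta > 0$: then $\nu = \lambda - \sum_{\beta'\neq\beta} n_{\beta'}\beta' - (n_\beta-1)\beta$, so $(\nu,r-1)\in\Gamma(\lambda,S)$ (the multiplicities summing to $r-1$), and induction supplies $(\lambda,0)\preccurlyeq(\nu,r-1)$. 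Since $\mu - \nu = -\beta \in R$, appending one cover step yields $(\lambda,0)\preccurlyeq(\mu,r)$. This disposes of the case when some $\beta\in S$ with $n_\beta > 0$ satisfies $\mu + \beta \in P^+$.

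The remaining, subtler case --- flagged explicitly in the paragraph preceding the lemma --- is when $\mu + \beta \notin P^+$ for \emph{every} $\beta \in S$ with $n_\beta > 0$. Here the intermediate $\nu$ must leave $\Gamma(\lambda,S)$, which is permitted because only membership in~$P^+$ is required. I would then search for $\nu \in P^+$ with $\mu - \nu \in R \cup \{0\}$ such that $\lambda - \nu$ admits a decomposition as a sum of at most $r-1$ elements of~$R$ (no longer restricted to~$S$), after which the inductive hypothesis --- applied with $S$ replaced by a suitable $S' \subset R$ --- delivers the chain. A particularly clean choice is the idle cover $\nu = \mu$, which reduces matters to writing $\lambda - \mu$ as a sum of at most $r - 1$ roots; padding by additional idle covers then brings the total length to~$r$.

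The principal obstacle is showing that such a compressed decomposition always exists in the subtle case. The hypothesis $\mu + \beta \notin P^+$ for every used~$\beta$ forces each such~$\beta$ to pair sufficiently negatively with some coroot $h_i$ at which $\mu$ lies on the boundary of the Weyl chamber, while $\sum n_\beta \beta = \lambda - \mu$ still returns to~$P^+$. This combinatorial tension forces algebraic relations among the~$\beta$'s --- for instance, a pair $\beta_j + \beta_k$ lying in~$R$, as in the motivating $A_2$ example where $\alpha_1 + \alpha_2 = \theta$ allows rewriting $\omega_1 + \omega_2$ as a single root. Converting this heuristic into a uniform argument, presumably via case analysis on the face of the Weyl chamber containing~$\mu$ together with the combinatorics of root systems, will be the main technical task.
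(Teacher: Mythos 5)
Your easy case is correct and matches the paper's first step (if some used $\beta$ satisfies $\mu+\beta\in P^+$, peel it off and induct), but the hard case is left as an acknowledged heuristic, and the specific reduction you propose for it would not work. You suggest handling the case ``$\mu+\beta\notin P^+$ for every used $\beta$'' by taking the idle cover $\nu=\mu$ and compressing $\lambda-\mu$ into at most $r-1$ roots. No such compression need exist: $\lambda-\mu$ may genuinely require $r$ roots, and yet the lemma still holds. The point you are missing is that one should not compress the decomposition but \emph{rewrite} it, keeping length $r$ while changing which roots appear, until one of them can be added to $\mu$ without leaving $P^+$. (This is also why the paper first reduces to $S=R$: the new roots need not lie in $S$.)

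Concretely, the paper's argument in the hard case runs as follows. Write $\lambda-\mu=\sum_{j=1}^r\beta_j$ with $\Ht(\beta_1)$ maximal among the summands, and argue by downward induction on $\Ht(\beta_1)$; the induction starts because if $\Ht(\beta_1)$ is maximal in $R$ then $\beta_1=\theta\in P^+$, whence $\mu+\beta_1\in P^+$. If $\mu+\beta_1\notin P^+$, pick $i$ with $(\mu+\beta_1)(h_i)<0$, so $\beta_1(h_i)<0$ and $\beta_1+\alpha_i\in R\cup\{0\}$. Since $\lambda\in P^+$, the partial sums $\big(\mu+\sum_{k\le s}\beta_k\big)(h_i)$ must pass from negative to nonnegative at some index $s$, forcing $\beta_s(h_i)>0$ and $\beta_s-\alpha_i\in R\cup\{0\}$. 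Replacing $\beta_1$ by $\beta_1+\alpha_i$ and $\beta_s$ by $\beta_s-\alpha_i$ gives a new length-$r$ decomposition (neither new term is zero, since that would contradict the assumption that $\lambda-\mu$ cannot be written with fewer than $r$ roots) whose top root has strictly larger height, and the downward induction closes. Your final paragraph gestures at ``algebraic relations among the $\beta$'s'' and a case analysis on the face of the Weyl chamber, but this partial-sum crossing argument is the uniform mechanism you would need, and without it the proof is incomplete.
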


\begin{pf} Since $\Gamma(\lambda,S)\subset \Gamma(\lambda, R)$,
it suffices to prove the Lemma for~$\Gamma(\lambda, R)$. We proceed
 by induction on~$r$ and note that induction begins since the result is obviously true for $r\le 1$.
 For the inductive step, we may assume
 that $(\lambda,0)\preccurlyeq (\nu,s)$ for all $(\nu,s)\in \Gamma(\lambda,R)$ with $s<r$.
Let $(\mu,r)\in\Gamma(\lambda,R)$. If $\lambda-\mu$ can be written as a sum of $s<r$ roots then $(\mu,s)\in\Gamma(\lambda,R)$ and hence we have
$(\lambda,0)\preccurlyeq (\mu,s)\prec (\mu,r)$.

Assume now that $\lambda-\mu$ cannot be written as a sum of $s$ roots from $R$ with $s<r$. Choose $\beta_k\in R$, $1\le k\le r $ with  $\Ht(\beta_1)\ge\Ht(\beta_k)$ such that
  $\lambda=\mu-\sum_{j=1}^r \beta_j$.  We claim that there exists another expression $\lambda-\mu=\sum_{j=1}^r \gamma_j$
with $\gamma_j\in R$, $\Ht(\gamma_1)\ge \Ht(\gamma_s)$, $1\le s\le r$ and such that  $\mu+\gamma_1\in P^+$. Note that this
claim implies the assertion since then $(\mu+\gamma_1,r-1)\in \Gamma(\lambda,R)$ and the induction hypothesis yields
$$
(\lambda,0)\prec (\mu+\gamma_1,r-1)\prec (\mu,r).
$$
To prove the claim, we use
downward  induction on $\Ht(\beta_1)$. The induction starts when $\Ht(\beta_1)$ is maximal since then $\beta_1=\theta\in P^+$
and so~$\mu+\beta_1\in P^+$.
For the inductive step, if~$\mu+\beta_1\notin P^+$,
choose~$i\in I$ such that $(\mu+\beta_1)(h_i)<0$ in which case we have~$\beta_1(h_i)<0$ and so $\beta_1+\alpha_i\in
R\cup\{0\}$.

On the other hand, since $\lambda\in P^+$, it follows that there exists $1< s\le r$ such that $$\Big(\mu+\sum_{k=1}^{s-1}\beta_k\Big)(h_{i})<0,\qquad \Big(\mu+\sum_{k=1}^{s}\beta_k\Big)(h_{i})
\ge 0.
$$
Then $\beta_{s}(h_{i})>0$ and hence $\beta_s-\alpha_{i}\in R\cup\{0\}$. Setting $$\gamma_1=\beta_1+\alpha_i,\qquad\gamma_s=\beta_s-\alpha_i,\qquad  \gamma_j=\beta_j,\quad  2\le j\le r,\,  j\ne s,$$
we see that
$\lambda-\mu=\sum_{j=1}^r \gamma_j$.
If $\gamma_1=0$ or $\gamma_s=0$ then $\lambda-\mu$ can
be written as a sum of less than~$r$ roots which is a contradiction. Thus, $\gamma_1,\gamma_s\in R$ and
we have obtained an expression of~$\lambda-\mu$ as a sum of~$r$ roots with $\Ht(\gamma_1)\ge \Ht(\gamma_s)$, $1\le s\le r$ and $\Ht(\gamma_1)>\Ht(\beta_1)$.
The  induction hypothesis applies to this new expression and hence  completes the proof of the claim.
\end{pf}

\subsection{}\label{PT1.30}
\begin{prop}
\begin{enumerate}[{\rm(i)}]
\item\label{PT1.30.i} Given $\mu\in P^+$, there exists at most one $s\in\bz_+$ such that $(\mu,s)\in\Gamma$.
\item\label{PT1.30.ii} The subset $\Gamma$ is interval closed and finite.
\item\label{PT1.30.iii} Suppose that $(\mu,r), (\nu,s)\in\Gamma$ and $(\mu,r)\prec (\nu,s)$. Then $(\nu,s-r)\in \Gamma(\mu,\Psi)$.

    \end{enumerate}
\end{prop}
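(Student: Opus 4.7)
The three parts all hinge on the rigidity property~\eqref{rigid}, and I would prove them in the order \eqref{PT1.30.i}, \eqref{PT1.30.iii}, \eqref{PT1.30.ii}, since \eqref{PT1.30.iii} does the heavy lifting. For \eqref{PT1.30.i}, if $(\mu,s_1),(\mu,s_2)\in\Gamma$ then $\lambda-\mu$ admits two expansions $\sum_{\beta\in\Psi} p_\beta^{(j)}\beta$ with $p_\beta^{(j)}\in\bz_+$ and $\sum_{\beta}p_\beta^{(j)}=s_j$, $j=1,2$. Applying \eqref{rigid} with $m_\alpha=p_\alpha^{(1)}$ extended by zero outside~$\Psi$ and $n_\beta=p_\beta^{(2)}$ yields $s_2\le s_1$, and the symmetric application yields $s_1\le s_2$.

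For \eqref{PT1.30.iii}, write $\lambda-\mu=\sum_{\beta\in\Psi} p_\beta\beta$ and $\lambda-\nu=\sum_{\beta\in\Psi} q_\beta\beta$ with $p_\beta,q_\beta\in\bz_+$, $\sum p_\beta=r$, $\sum q_\beta=s$. A chain of covers realizing $(\mu,r)\prec(\nu,s)$ produces $\nu-\mu=\sum_{i=1}^{s-r} e_i$ with $e_i\in R\cup\{0\}$; set $E_\pm=\{i\,:\,\pm e_i\in R^+\}$ and $E_0=\{i\,:\,e_i=0\}$. Equating $\sum_\beta(p_\beta-q_\beta)\beta=\sum_i e_i$ and moving every summand so that it carries a nonnegative coefficient gives
\[
\sum_{\beta\,:\,p_\beta>q_\beta}(p_\beta-q_\beta)\beta+\sum_{i\in E_+\cup E_-}(-e_i)=\sum_{\beta\,:\,q_\beta>p_\beta}(q_\beta-p_\beta)\beta.
\]
Write $A=\sum_{p_\beta>q_\beta}(p_\beta-q_\beta)$ and $B=\sum_{q_\beta>p_\beta}(q_\beta-p_\beta)$. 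The left hand side is a $\bz_+$-combination of $A+|E_+|+|E_-|$ roots of~$R$ (note $-e_i\in R^+$ for $i\in E_-$ and $-e_i\in -R^+$ for $i\in E_+$), while the right hand side is a $\bz_+$-combination of $B$ roots of~$\Psi$. Since $B-A=s-r$ and $|E_+|+|E_-|\le s-r$, property~\eqref{rigid} squeezes $B\le A+|E_+|+|E_-|\le A+s-r=B$, so equality holds throughout. Hence $|E_0|=0$ and every nonzero summand on the left lies in $\Psi\subset R^+$; but $-e_i\in -R^+$ for $i\in E_+$, forcing $E_+=\emptyset$. Consequently $e_i\in -\Psi$ for every $i$, and $\mu-\nu=\sum_{i=1}^{s-r}(-e_i)\in\bz_+\Psi$ with total weight $s-r$, i.e., $(\nu,s-r)\in\Gamma(\mu,\Psi)$. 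A byproduct worth recording: \emph{any} chain of covers whose endpoints both lie in~$\Gamma$ consists entirely of steps in~$-\Psi$.

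For \eqref{PT1.30.ii}, finiteness is immediate from \eqref{PT1.30.i}: the projection $(\mu,r)\mapsto\mu$ embeds $\Gamma$ into the standard finite set $\{\mu\in P^+\,:\,\mu\le\lambda\}$. For interval-closedness, given $(\mu_1,r_1)\preccurlyeq(\nu,s)\preccurlyeq(\mu_2,r_2)$ with $(\mu_1,r_1),(\mu_2,r_2)\in\Gamma$, I would concatenate chains to obtain a single chain from $(\mu_1,r_1)$ to $(\mu_2,r_2)$ passing through $(\nu,s)$ and invoke the byproduct above: every step of this chain lies in $-\Psi$, so its initial segment up to $(\nu,s)$ gives $\mu_1-\nu\in\bz_+\Psi$ with total weight $s-r_1$, and combining with $\lambda-\mu_1\in\bz_+\Psi$ of total weight $r_1$ places $(\nu,s)$ in~$\Gamma$.

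The main obstacle is the rearrangement step in~\eqref{PT1.30.iii}: one must split the raw identity $\sum(p_\beta-q_\beta)\beta=\sum e_i$ into its positive-coefficient pieces in exactly the right way so that the two counting identities $B-A=s-r$ and $|E_+|+|E_-|\le s-r$ collide with \eqref{rigid} to force all bounds to be tight simultaneously. The unexpected conclusion $E_+=\emptyset$---that no step of the chain is a positive root---is where the minimality content of~\eqref{rigid} really comes into play.
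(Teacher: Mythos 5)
Your proof is correct, and all three parts rest on the same engine as the paper's argument, namely the rigidity property~\eqref{rigid} combined with a count of the number of root summands. The architecture, however, is genuinely different. The paper proves~\eqref{PT1.30.ii} and~\eqref{PT1.30.iii} in a single stroke: using Lemma~\ref{PT1.10} it reduces interval-closedness to showing that any $(\mu,r)$ with $(\lambda,0)\prec(\mu,r)\prec(\nu,s)\in\Gamma$ lies in $\Gamma$, writes $\lambda-\mu$ and $\mu-\nu$ as sums of at most $r$ and $s-r$ roots coming from the two chains, writes $\lambda-\nu$ as a sum of exactly $s$ elements of~$\Psi$, and applies~\eqref{rigid} once to the clean identity $\sum m_\beta\beta=\sum(n_\alpha+k_\alpha)\alpha$ --- no sign-splitting is needed because all coefficients are already nonnegative. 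You instead take both endpoints in~$\Gamma$, which forces you to compare two $\Psi$-expansions of $\lambda-\mu$ and $\lambda-\nu$ against a chain expansion of $\nu-\mu$, and the price is the rearrangement step isolating positive coefficients; the payoff is the sharper byproduct that \emph{every} step of a chain between two elements of~$\Gamma$ lies in $-\Psi$, which makes interval-closedness immediate and lets you bypass Lemma~\ref{PT1.10} entirely for part~\eqref{PT1.30.ii}. Your double-squeeze $B\le A+|E_+|+|E_-|\le A+s-r=B$ is sound, and the conclusion $E_+=\emptyset$ does follow from the equality case of~\eqref{rigid} since $-e_i\in\Psi\subset R^+$ is incompatible with $e_i\in R^+$. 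Both routes are valid; the paper's is shorter because anchoring at $(\lambda,0)$ keeps all coefficients nonnegative from the start, while yours is more self-contained and yields the chain-step description of~$\Gamma$ as a bonus.
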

\begin{pf}
Part~\eqref{PT1.30.i} is immediate from \eqref{rigid}. To prove~\eqref{PT1.30.ii}, by Lemma~\ref{PT1.10} it is enough to show that if  $(\mu,r)\in\Lambda$ and
$(\nu,s)\in\Gamma$ satisfy
$(\lambda,0)\prec(\mu,r)\prec(\nu,s)$ then
$(\mu,r)\in\Gamma$. We can write,
$$\lambda-\mu=\sum_{\alpha\in R} n_\alpha\alpha,\qquad\mu-\nu=\sum_{\alpha\in R} k_\alpha\alpha,\qquad\lambda-\nu=\sum_{\beta\in \Psi} m_\beta\beta,$$ with $n_\alpha, k_\alpha, m_\beta\in\bz_+$  such that  $$\sum_{\alpha\in R} n_\alpha\le r,\qquad\sum_{\alpha\in R}k_\alpha\le s-r,\qquad \sum_{\beta\in\Psi}m_\beta=s,$$
and hence we get $$
\sum_{\beta\in\Psi} m_\beta \beta=\sum_{\alpha\in R}( n_\alpha+k_\alpha)\alpha,\qquad
\sum_{\alpha\in R}(n_\alpha+k_\alpha)\le s=\sum_{\beta\in\Psi} m_\beta.$$
Using \eqref{rigid} we see that this implies that $$n_\alpha=k_\alpha=0,\quad \alpha\notin\Psi,\qquad \sum_{\alpha\in R}n_\alpha=r,\quad \sum_{\alpha\in R}k_\alpha=s-r,$$ which proves simultaneously that  $(\mu,r)\in\Gamma$ and $(\nu,s)\in\tau_r\Gamma(\mu,\Psi)$ or equivalently $(\nu,s-r)\in\Gamma(\mu,\Psi)$. Note that the set $(\lambda-Q^+)\cap P^+$ is finite and now using part~\eqref{PT1.30.i} we see that $\Gamma$ is finite.
\end{pf}

\subsection{}\label{PT1.40} It follows from Proposition~\ref{analogii} and  Proposition~\ref{PT1.30}\eqref{PT1.30.ii} that $P(\mu,r)^\Gamma$ is finite-di\-men\-sion\-al.
To complete the proof of Theorem~\ref{thm1} we must  determine the defining relations for $P(\mu,r)^\Gamma$. As a first step, we have,\begin{lem}
The generator $p_{\mu,r}^\Gamma$ of $P(\mu,r)^\Gamma$ satisfies the relations~\eqref{rel1} and~\eqref{rel2}.
\end{lem}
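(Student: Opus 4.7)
The relations in \eqref{rel1} hold in $P(\mu,r)^\Gamma$ automatically: $p_{\mu,r}^\Gamma$ is the image of $p_{\mu,r}$ under the canonical surjection $P(\mu,r)\twoheadrightarrow P(\mu,r)^\Gamma$, and $p_{\mu,r}$ satisfies these relations in $P(\mu,r)$ by Proposition~\ref{projLambda}\eqref{projLambda.iii}.

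For \eqref{rel2}, my plan is a uniform weight argument using the $\xi$-pairing. Put $M:=\max_{\alpha\in R}(\alpha,\xi)$, which is strictly positive: $\xi\neq 0$ since $\Psi\subsetneq R$, and $\xi\in P^+$ forces $(\alpha_i,\xi)\geq 0$ for every simple root with strict inequality for some $i$, giving $M\geq(\theta,\xi)>0$. By definition, every $\beta\in\Psi$ satisfies $(\beta,\xi)=M$. Now any composition factor $\ev_sV(\nu)$ of $P(\mu,r)^\Gamma$ has $(\nu,s)\in\Gamma$ by the very definition of $\mathcal G_2[\Gamma]$, so pairing the equations $\lambda-\mu=\sum_\beta m_\beta\beta$ ($\sum m_\beta=r$) and $\lambda-\nu=\sum_\beta n_\beta\beta$ ($\sum n_\beta=s$) against $\xi$ gives $(\nu,\xi)=(\mu,\xi)-M(s-r)$. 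In particular every composition factor of $P(\mu,r)^\Gamma$ at degree $r+1$ has highest weight $\nu$ with $(\nu,\xi)=(\mu,\xi)-M$. Since $\xi\in P^+$, every weight $\eta$ of $V(\nu)$ obeys $(\nu-\eta,\xi)\geq 0$, so the weight space $P(\mu,r)^\Gamma[r+1]_\eta$ can be nonzero only if $(\eta,\xi)\leq(\mu,\xi)-M$.

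It remains to check that each element on the left in \eqref{rel2} violates this $\xi$-bound. The vectors $h_{\ad}\,p_{\mu,r}$, $(x_\alpha^+)_{\ad}p_{\mu,r}$ (for $\alpha\in R^+$) and $(x_\alpha^-)_{\ad}p_{\mu,r}$ (for $\alpha\in R^+\setminus\Psi$) have weights $\mu$, $\mu+\alpha$, and $\mu-\alpha$, with $\xi$-pairings $(\mu,\xi)$, $(\mu,\xi)+(\alpha,\xi)$, and $(\mu,\xi)-(\alpha,\xi)$ respectively. The first strictly exceeds $(\mu,\xi)-M$ because $M>0$; the second is at least the first since $(\alpha,\xi)\geq 0$; the third exceeds $(\mu,\xi)-M$ precisely because $(\alpha,\xi)<M$, which is exactly the content of $\alpha\notin\Psi$. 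In every case the relevant weight space of $P(\mu,r)^\Gamma[r+1]$ is zero, so the element vanishes. The one non-trivial step is this $(x_\alpha^-)_{\ad}$ case: a naive dominance-order argument would be inadequate, because $\mu-\alpha$ \emph{is} a weight of $V(\mu-\beta)$ whenever $\beta\leq\alpha$ in $Q^+$; the $\xi$-pairing is the finer invariant that cuts through this, tailored precisely to the defining condition of $\Psi$.
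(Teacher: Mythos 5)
Your proof is correct, but it takes a genuinely different route from the paper's. The paper treats the three families of relations in \eqref{rel2} separately and argues by contradiction each time: it chooses a root $\alpha$ of extremal height among those with $(x^{\pm}_\alpha)_{\ad}\,p^\Gamma_{\mu,r}\ne 0$, verifies that the resulting vector is annihilated by $\lie n^+$ (this is where the maximality or minimality of $\Ht(\alpha)$ and the defining property of $\Psi$ are used), deduces that some pair $(\nu,r+1)$ with $\nu\in\{\mu+\alpha,\,\mu,\,\mu-\alpha\}$ would have to lie in $\Gamma$, and then excludes this via parts (i) and (iii) of Proposition~\ref{PT1.30}. You instead run a single uniform weight argument: since $(\beta,\xi)=M>0$ is constant on $\Psi$ and $P(\mu,r)^\Gamma[r+1]$ is a finite-dimensional, hence semisimple, $\lie g$-module all of whose constituents $V(\nu)$ have $(\nu,r+1)\in\Gamma(\lambda,\Psi)$, every weight $\eta$ occurring in degree $r+1$ satisfies $(\eta,\xi)\le(\mu,\xi)-M$, and each element on the left of \eqref{rel2} sits in a weight space violating this bound. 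The pieces you need --- $\xi\ne 0$ because $\Psi\subset R^+$, $M\ge(\theta,\xi)>0$, $(\alpha,\xi)\ge 0$ for $\alpha\in R^+$, and $\alpha\notin\Psi\iff(\alpha,\xi)<M$ --- are all in order. What your approach buys is uniformity and independence from the highest-weight-vector analysis and from Proposition~\ref{PT1.30}; in effect you re-derive directly from the definition of $\Psi$ the consequence that the paper extracts from the rigidity property \eqref{rigid} and the structure of $\Gamma$, which is the toolkit it reuses throughout Section~\ref{PT1}. Either way the lemma is established.
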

\begin{pf}
Since $p_{\mu,r}\in P(\mu,r)$  satisfies~\eqref{rel1} in $P(\mu,r)$ it follows that they also hold for its image $p_{\mu,r}^\Gamma$.
We prove that $p_{\mu,r}^\Gamma$ satisfies the relations~\eqref{rel2}. Suppose first that $(\lie n^+)_{\ad} p_{\mu,r}^\Gamma\not=0$ and choose
$\alpha\in R^+$ such that $\Ht(\alpha)$ is maximal with the property $(x^+_\alpha)_{\ad}p_{\mu,r}^\Gamma\ne 0$. Then $$\lie n^+\left((x^+_\alpha)_{\ad}p_{\mu,r}^\Gamma\right)=0.$$ It follows from the standard representation theory of simple Lie algebras that $\mu+\alpha\in P^+$ and since $P(\mu,r)^{\Gamma}\in\Ob\mathcal G_2[\Gamma]$, we conclude that
$$(\mu+\alpha,r+1)\in\Gamma,\qquad (\mu,r)\prec(\mu+\alpha,r+1).$$
Using Proposition~\ref{PT1.30}\eqref{PT1.30.iii}, we see that this forces $(\mu+\alpha,1)\in \Gamma(\mu,\Psi)$.  But  since $\alpha\in R^+$, this is impossible by the definition of $\Gamma(\mu,\Psi)$. Hence we have a contradiction and we have proved that \begin{equation}\label{nad}(\lie n^+)_{\ad}p_{\mu,r}^\Gamma=0.\end{equation}

Next, suppose that $(h)_{\ad}p_{\mu,r}^\Gamma\not=0$ for some $h\in\lie h$. Since $[\lie n^+,\lie h_{\ad}]\subset\lie n^+_{\ad}$, we see by using \eqref{nad} that  $$\lie n^+(h)_{\ad}p_{\mu,r}^\Gamma=0,$$
 hence $(\mu,r+1)\in\Gamma$. But since $(\mu,r)\in\Gamma$ this is impossible by Proposition~\ref{PT1.30}\eqref{PT1.30.i}, and so we have
 \begin{equation}\label{had}(\lie h)_{\ad}p_{\mu,r}^\Gamma=0.\end{equation}

 Finally, if $(\lie n^-_{R^+\setminus\Psi})_{\ad} p_{\mu,r}^\Gamma\not=0$,
 choose $\alpha\in R^+\setminus\Psi$ with $\Ht(\alpha)$ minimal such that $(x^-_\alpha)_{\ad}p_{\mu,r}^\Gamma\ne 0$. Suppose that there exists $\beta\in R^+$ such that
$$x_\beta^+(x^-_\alpha)_{\ad}p_{\mu,r}^\Gamma\ne 0.$$ By \eqref{nad} and \eqref{had} we see that this implies
  $\alpha-\beta\in R^+$ and in fact that
  $$(x^-_{\alpha-\beta})_{\ad}p_{\mu,r}^\Gamma= [x_\beta^+,(x^-_\alpha)_{\ad}]p_{\mu,r}^\Gamma\ne 0.$$
The minimality of $\Ht(\alpha)$ now forces $\alpha-\beta\in\Psi$, and since $$(\lambda,\alpha)=(\lambda,\alpha-\beta)+(\lambda,\beta)\ge (\lambda,\alpha-\beta)$$  the definition of $\Psi$ forces $\alpha\in\Psi$. But this contradicts our assumption that $\alpha\in R^+\setminus\Psi$. Thus, $\lie n^+ (x^-_\alpha)_{\ad}p_{\mu,r}^\Gamma=0$, which implies that $(\mu-\alpha,r+1)\in\Gamma$ and so
$(\mu-\alpha,1)\in\Gamma(\mu,\Psi)$ by Proposition~\ref{PT1.30}\eqref{PT1.30.ii}. Since~$\alpha\notin\Psi$, we obtain a contradiction.
\end{pf}

\subsection{}\label{PT1.50} To complete the proof of Theorem \ref{thm1} we must show that \eqref{rel1} and \eqref{rel2} are the defining relations.
\begin{prop}\label{defining}
Let $(\mu,r)\in \Gamma$ and suppose that  $\bp_\Psi(\mu,r)\in\Ob\mathcal G_2$ is the $\lie g\ltimes\lie g_{\ad}$-module generated by an element $\bop_{\mu,r}$ with grade $r$ and (graded)  defining relations: \begin{gather}\label{rel1a}\lie n^+  \bop_{\mu,r} =0,\qquad
h\bop_{\mu,r}=\mu(h)\bop_{\mu,r},\quad h\in\lie h,\qquad
(x^-_{\alpha_i})^{\mu(h_i)+1} \bop_{\mu,r}=0,\quad  i\in I,\\
\label{rel1b}
 \lie n^+_{\ad}\bop_{\mu,r}=0=\lie h_{\ad} \bop_{\mu,r},\quad
(x^-_\alpha)_{\ad}\ \bop_{\mu,r}=0,\quad\alpha\in R^+\setminus\Psi.\end{gather}
Then $\bp_\Psi(\mu,r)$ is an idecomposable
object in~$\mathcal G_2[\Gamma]$ and $\ev_r V(\mu)$ is its unique simple quotient.
\end{prop}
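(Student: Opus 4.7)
The plan is to handle the uniqueness of the simple quotient and the indecomposability of $\bp_\Psi(\mu,r)$ by soft arguments involving a universal $\lie g$-equivariant surjection, and then to treat the containment in $\mathcal G_2[\Gamma]$ as the main technical step. To this end, I first observe that $\ev_rV(\mu)$ (with $\lie g_{\ad}$ acting trivially) satisfies all of~\eqref{rel1a} and~\eqref{rel1b}, so there is a canonical surjection $\bp_\Psi(\mu,r)\twoheadrightarrow \ev_rV(\mu)$; in particular $\bop_{\mu,r}\ne 0$. Any simple quotient $\ev_sV(\nu)$ of $\bp_\Psi(\mu,r)$ must receive a nonzero $\mu$-weight vector in grade $r$ killed by $\lie n^+$, which by the classification of simple objects in $\mathcal G_2$ forces $s=r$ and $\nu=\mu$; hence $\ev_rV(\mu)$ is the unique simple quotient.

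For indecomposability, I would introduce the $\lie g$-equivariant surjection
\[
\Phi\colon\bs(\lie g_{\ad})\otimes V(\mu)\twoheadrightarrow\bp_\Psi(\mu,r),\qquad X\otimes Yv_\mu\longmapsto XY\bop_{\mu,r},
\]
well-defined by~\eqref{rel1a}, surjective via the PBW isomorphism $\bu(\lie g\ltimes\lie g_{\ad})\cong\bu(\lie g)\otimes\bs(\lie g_{\ad})$, and equivariant for the diagonal $\lie g$-action (adjoint on the first factor). Placing $V(\mu)$ in grade $r$ and $\bs^n(\lie g_{\ad})$ in grade $n$, the grade-$r$ weight-$\mu$ component of the source is the one-dimensional subspace $1\otimes V(\mu)_\mu$; hence the same component of $\bp_\Psi(\mu,r)$ is one-dimensional and spanned by $\bop_{\mu,r}$. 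A hypothetical splitting $\bp_\Psi(\mu,r)=M_1\oplus M_2$ would decompose $\bop_{\mu,r}=m_1+m_2$ with each $m_i$ lying in that one-dimensional grade-$r$ weight-$\mu$ space; since $M_1\cap M_2=0$, one of the $m_i$ must vanish, and the corresponding summand is trivial as $\bop_{\mu,r}$ generates the whole module.

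The main obstacle is establishing $\bp_\Psi(\mu,r)\in\Ob\mathcal G_2[\Gamma]$. Using PBW together with the commutativity of $\bs(\lie g_{\ad})$ and~\eqref{rel1b}, one has $\bs(\lie g_{\ad})\bop_{\mu,r}=\bs((\lie n^-_\Psi)_{\ad})\bop_{\mu,r}$, so
\[
\bp_\Psi(\mu,r)=\bu(\lie g)\cdot\bs((\lie n^-_\Psi)_{\ad})\bop_{\mu,r},
\]
and the grade-$(r+n)$ piece is the $\lie g$-span of the weight space $W_n:=\bs^n((\lie n^-_\Psi)_{\ad})\bop_{\mu,r}$ whose weights lie in $\mu-\sum_{\beta\in\Psi}\bz_+\beta$ of total coefficient~$n$. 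The technical task is then to show that any $\lie g$-highest weight $\nu$ appearing in grade $r+n$ satisfies $\mu-\nu\in\sum_{\beta\in\Psi}\bz_+\beta$ with total coefficient exactly $n$; this is where the rigidity property~\eqref{rigid} is deployed, in the manner of Proposition~\ref{PT1.30}\eqref{PT1.30.ii}, to rule out non-$\Psi$ root contributions from the $\bu(\lie g)$-action on $W_n$. Once this containment is secured, the surjection $\bp_\Psi(\mu,r)\twoheadrightarrow P(\mu,r)^\Gamma$ furnished by the lemma in~\S\ref{PT1.40}, combined with the maximality of $P(\mu,r)^\Gamma$ as a quotient of $P(\mu,r)$ in $\mathcal G_2[\Gamma]$ providing the reverse surjection $P(\mu,r)^\Gamma\twoheadrightarrow\bp_\Psi(\mu,r)$, identifies $\bp_\Psi(\mu,r)\cong P(\mu,r)^\Gamma$.
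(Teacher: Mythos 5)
Your arguments for the unique simple quotient and for indecomposability are correct, and are mildly different from the paper's (which instead notes that the sum of proper submodules of the cyclic module $\bp_\Psi(\mu,r)$ is again proper): the surjection $\bs(\lie g_{\ad})\otimes V(\mu)\twoheadrightarrow\bp_\Psi(\mu,r)$ does show $\dim\bp_\Psi(\mu,r)[r]_\mu=1$, and your splitting argument then goes through. The reduction of the containment $\bp_\Psi(\mu,r)\in\Ob\mathcal G_2[\Gamma]$ to controlling the highest weights occurring in $\bu(\lie g)W_n$, with $W_n=\bs^n((\lie n^-_\Psi)_{\ad})\bop_{\mu,r}$, is also the right reduction.

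The gap is in the step you defer. The rigidity property~\eqref{rigid} cannot ``rule out non-$\Psi$ root contributions from the $\bu(\lie g)$-action on $W_n$'': it is a purely combinatorial statement about expressing elements of $\bz_+\Psi$ as sums of roots, and it constrains the index set $\Gamma(\mu,\Psi)$, not the module. Weight considerations alone do not exclude, say, a copy of $V(\nu)$ with $\mu-\nu=\beta_1+\beta_2$, $\beta_i\in\Psi$, sitting inside $\bu(\lie g)W_1$ in grade $r+1$: the weight $\nu$ certainly occurs in $\bu(\lie n^-)W_1$, and \eqref{rigid} only tells you that $(\nu,1)\notin\Gamma(\mu,\Psi)$ --- which is exactly the configuration you must show does not occur, not a reason why it does not. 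What closes this is representation theory, via two ingredients absent from your sketch: (a) a PBW rearrangement putting $\bu(\lie n^-)$ on the outside, $\bp_\Psi(\mu,r)=\bu(\lie n^-)\,\bu((\lie n^-_\Psi)_{\ad})\bop_{\mu,r}$ (equivalently, that $\lie h\oplus\lie n^+$ preserves $\bu((\lie n^-_\Psi)_{\ad})\bop_{\mu,r}$, a short computation with \eqref{rel1b}), so that every weight-$\nu$ vector in grade $r+n$ is a sum of a vector in $(W_n)_\nu$ and a vector in $\lie n^-\bigl(\bigoplus_{\eta>\nu}(\cdots)_\eta\bigr)$; and (b) Lemma~\ref{PT1.70}: in an integrable, hence completely reducible, $\lie g$-module a nonzero singular vector of weight $\nu$ cannot lie in $\lie n^-\bigl(\bigoplus_{\eta>\nu}V_\eta\bigr)$. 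Together these force any singular vector in grade $r+n$ to have nonzero component in $W_n$ at its own weight, which is what places that weight in $\mu$ minus a sum of exactly $n$ elements of $\Psi$. Complete reducibility is the engine here; \eqref{rigid} plays no role in this proposition (it is used to show $\Gamma$ is interval closed and again in Lemma~\ref{PT2.5}). Finally, the identification $\bp_\Psi(\mu,r)\cong P(\mu,r)^\Gamma$ in your last sentence is not part of the statement --- it is how Theorem~\ref{thm1} is deduced from it --- though your two-surjections argument would work once finite-dimensionality is in hand.
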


Assuming this proposition the proof of Theorem~\ref{thm1} is completed as follows.
\begin{pf}[Proof of Theorem~\ref{thm1}]

By Lemma~\ref{PT1.40}, $p_{\mu,r}^\Gamma$ satisfies the defining relations~\eqref{rel1a} and~\eqref{rel1b} hence
the assignment $\bop_{\mu,r}\mapsto p_{\mu,r}^\Gamma$ defines a surjective morphism $\phi:\bp_\Psi(\mu,r)\to P(\mu,r)^\Gamma$ in~$\mathcal G_2$.
Let $K$ be the kernel of this morphism so that we have a short exact sequence $$0\to K\to\bp_\Psi(\mu,r)\to P(\mu,r)^\Gamma\to 0.$$
Since $P(\mu,r)^\Gamma$ is projective  in $\mathcal G_2[\Gamma]$ while $\bp_\Psi(\mu,r)\in\Ob\mathcal G_2[\Gamma]$ by the above Proposition,
this sequence splits. Since  $\bp_\Psi(\mu,r)$ is indecomposable, it follows that $K=0$ and the theorem is proved.
\end{pf}

\subsection{}\label{PT1.70} We shall need the following elementary result on integrable representations of $\lie g$.

\begin{lem} Suppose that $V$ is an integrable $\lie g$-module, that is
for all $v\in V$ we have $\dim\bu(\lie g)v<\infty$. Then $V$ is isomorphic to a direct sum of simple $\lie g$-modules.
Moreover, $$0\not=v\in V_\mu,\qquad v\in\bu(\lie n^-)\Big(\bigoplus_{\nu\in\lie h^*\,:\, \mu<\nu} V_\nu\Big)\, \implies \, \lie n^+v\ne 0.$$
\end{lem}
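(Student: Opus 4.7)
\begin{pf}[Proof plan]
The statement splits into two independent assertions, and I would handle them in turn, using Weyl's complete reducibility theorem as the only non-trivial external input.

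For the first assertion, the plan is to reduce to Weyl's theorem for finite-dimensional representations of the semisimple Lie algebra $\lie g$. Given any $v\in V$, the submodule $\bu(\lie g)v$ is finite-dimensional by the integrability hypothesis, hence by Weyl's theorem it decomposes as a finite direct sum of simple finite-dimensional $\lie g$-modules. In particular $v$ lies in the sum $V_{\mathrm{ss}}\subset V$ of all simple submodules, so $V_{\mathrm{ss}}=V$. A standard argument (the sum of a family of simple submodules is always a direct sum of a subfamily, by a Zorn-type selection of a maximal independent subfamily) then upgrades this sum to a direct sum decomposition.

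For the second assertion, I would fix a decomposition $V=\bigoplus_j V^{(j)}$ with $V^{(j)}\cong V(\lambda^{(j)})$ simple, and argue by contradiction. Suppose $0\ne v\in V_\mu$, $v\in\bu(\lie n^-)W$ with $W=\bigoplus_{\nu>\mu}V_\nu$, and $\lie n^+v=0$. Write $v=\sum_j v_j$ with $v_j\in (V^{(j)})_\mu$. Since the decomposition is $\lie g$-stable, $\lie n^+v_j=0$ for each $j$, so whenever $v_j\ne 0$ the vector $v_j$ is an $\lie n^+$-invariant weight vector of the simple module $V^{(j)}$, forcing $\mu=\lambda^{(j)}$ and $v_j$ spanning the highest-weight line of $V^{(j)}$.

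Now I would exploit the other hypothesis. Since $\bu(\lie n^-)$ preserves each summand $V^{(j)}$, the inclusion $v\in\bu(\lie n^-)W$ decomposes componentwise as $v_j\in\bu(\lie n^-)\bigl((V^{(j)})_{>\mu}\bigr)$ for every $j$. But for the indices $j$ with $v_j\ne 0$ we have just shown $V^{(j)}\cong V(\mu)$, so $(V^{(j)})_{>\mu}=0$ and hence $v_j=0$, a contradiction. I do not expect a serious obstacle here: the first part is an immediate application of Weyl's theorem, and the second part is just the observation that $\lie n^+$-invariance of $v$ in a semisimple module forces each simple component to be of highest weight $\mu$, which is incompatible with $v$ being reachable from strictly higher weight spaces by $\bu(\lie n^-)$.
\end{pf}
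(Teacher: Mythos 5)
Your proof is correct and follows essentially the same route as the paper: complete reducibility via Weyl's theorem for the first assertion, and for the second the observation that an $\lie n^+$-invariant vector of weight $\mu$ generates a copy of $V(\mu)$, which cannot be reached by applying $\bu(\lie n^-)$ to strictly higher weight spaces. The only cosmetic difference is that you decompose $V$ fully into simple summands and argue componentwise by contradiction, whereas the paper groups the summands into isotypical components $V[\lambda]$ and notes directly that $\bu(\lie n^-)\bigl(\bigoplus_{\nu>\mu}V_\nu\bigr)$ lands in $\bigoplus_{\lambda>\mu}V[\lambda]$, which meets $V[\mu]$ trivially.
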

\begin{pf} By assumption, $V$ is a sum of finite dimensional $\lie g$-modules. Since any finite-dimensional $\lie g$-module is semisimple,
the first statement follows say by~\cite[\S XVII.2]{L}.
To prove the second assertion, write
$$
V=\bigoplus_{\lambda\in P^+}V[\lambda],
$$
where $V[\lambda]$ is the isotypical component  of~$V$  corresponding to $\lambda\in P^+$.  Suppose that $$0\not=v\in V_\mu,\ \
\lie n^+ v=0. $$ Then since $\bu(\lie g)v$ is finite-dimensional, it follows that   $\bu(\lie g)v\cong V(\mu)$ and hence that  $v\in V[\mu]$.
Since $$\mu<\nu\implies V_\nu\subset \bigoplus_{\lambda\in P^+\,:\,\mu<\lambda} V[\lambda],$$ and $$V[\mu]\ \cap\ \bigoplus_{\lambda\in P^+\,:\,\mu<\lambda} V[\lambda]=\{0\},$$ it follows that $$v\notin\bu(\lie n^-)\Big(\bigoplus_{\nu\in\lie h^*\,:\, \mu<\nu} V_\nu\Big),$$ which proves the Lemma.
\qedhere
\end{pf}

\subsection{}\label{PT1.90} We now prove Proposition \ref{defining}.
\begin{pf}[Proof of Proposition~\ref{defining}]
It is clear that $\ev_r V(\mu)$ is a quotient of $\bp_\Psi(\mu,r)$ and hence $\bp_\Psi(\mu,r)$ is non--zero. Moreover, since the sum of
proper submodules of~$\bp_\Psi(\mu,r)$ is again a proper submodule, $\bp_\Psi(\mu,r)$ has the unique maximal proper submodule
and therefore
$\ev_r V(\mu)$ is
the unique simple quotient of $\bp_\Psi(\mu,r)$. In particular, $\bp_\Psi(\mu,r)$ is indecomposable.

It remains to prove that $\bp_\Psi(\mu,r)$ is an object in~$\mathcal G_2[\Gamma]$ provided that $(\mu,r)\in\Gamma$.
By Proposition~\ref{PT1.30}\eqref{PT1.30.iii} we have $\tau_r\Gamma(\mu,\Psi)\subset\Gamma$. Since $$\tau_r\bp_\Psi(\mu,0)\cong\bp_\Psi(\mu,r),$$ it suffices to prove that
\begin{equation} \label{enoughb}\bp_\Psi(\mu,0)\in\Ob\mathcal G_2[\Gamma(\mu,\Psi)],\qquad \forall\, \mu\in P^+,\end{equation} or equivalently that \begin{equation}\label{enough}
 [\bp_\Psi(\mu,0):\ev_s V(\nu)]\ne 0\, \implies\, (\nu,s)\in\Gamma(\mu,\Psi).\end{equation}
 Now,  $$[\bp_\Psi(\mu,0):\ev_s V(\nu)]\ne 0\iff\Hom_{\lie g}(\bp_\Psi(\mu,0)[s], V(\nu))\ne 0,$$ and hence \eqref{enough} follows if we prove that for any $(\nu,s)\in \Lambda$ and
 we have \begin{equation}\label{enougha}
 0\not=v \in \bp_\Psi(\mu,0)[s]_\nu,\quad \lie n^+\, v=0\,\implies \, (\nu,s)\in\Gamma(\mu,\Psi).
 \end{equation}
 By the PBW theorem, we have a decomposition of $\bz_+$-graded vector spaces
 $$\bp_\Psi(\mu,0)=\bu(\lie n^-)\bu((\lie n^-_\Psi)_{\ad})\bop_{\mu,0}= \bu((\lie n^-_\Psi)_{\ad})\bop_{\mu,0}\oplus\lie n^-\bu((\lie n^-)\bu((\lie n^-_\Psi)_{\ad})\bop_{\mu,0} .$$
 If  $v$ satisfies the assumptions in the left hand side of \eqref{enougha} then Lemma~\ref{PT1.70} implies that
  $v$ has a non--zero projection onto the subspace $\big(\bu((\lie n^-_\Psi)_{\ad})\bop_{\mu,0}\big)[s]_\nu$ of $\bp_\Psi(\mu,0)[s]_\nu$.
  Fix a numbering $\{\beta_1,\dots,\beta_\ell\}$ of $\Psi$ and note that $\big(\bu((\lie n^-_\Psi)_{\ad})\bop_{\mu,0}\big)[s]_\nu$
  is spanned by elements of the form $$(x^-_{\beta_1})_{\ad}^{r_1}\cdots (x^-_{\beta_\ell})_{\ad}^{r_\ell}\bop_{\mu,0},\qquad (r_1,\dots,r_\ell)\in\bz_+^\ell,\quad \sum_{k=1}^\ell r_k=s,\quad
  \nu =\mu-\sum_{k=1}^\ell r_k\beta_k,$$ which proves that $(\nu,s)\in\Gamma(\mu,\Psi)$.
  \end{pf}

\subsection{}\label{PT2.5}
For $(\mu,r), (\nu,s)\in\Gamma$ set \begin{gather*}c_{(\nu,s)}^{(\mu,r)}=\dim\Hom_{\lie g}(V(\nu),\bigwedge^{s-r} \lie g\tensor V(\mu))\\
d_{(\nu,s)}^{(\mu,r)}=\dim\Hom_{\lie g}(V(\nu), S^{s-r}(\lie g)\tensor V(\mu)).\end{gather*}
where we use the convention $c^{(\mu,r)}_{(\nu,s)}=d_{(\nu,s)}^{(\mu,r)}=0$ if~$s<r$.
\begin{lem} For $(\mu,r), (\nu,s)\in\Gamma$ with $(\nu,s-r)\in\Gamma(\mu,\Psi)$, we have \begin{gather}\label{cmunu} c_{(\nu,s)}^{(\mu,r)}=\dim\{ v\in (\bigwedge^{s-r} \lie n^-_\Psi)_{\nu-\mu}\,:\, (x_i^+)^{\nu(h_i)+1}(v)=0,\,\forall\, i\in I\},\\
\label{dmunu}d_{(\nu,s)}^{(\mu,r)}= \dim\{v \in S^{s-r}(\lie n^-_\Psi)_{\nu-\mu}\,:\, (x_i^-)^{\nu(h_i)+1}(v)=0,\,\forall\, i\in I\,\}.\end{gather}
 In particular, $c_{(\nu,s)}^{(\mu,0)}=c_{\nu,s}^\mu$.
 \end{lem}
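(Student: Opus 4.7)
\begin{pf}[Proof plan]
I would prove the two identities~\eqref{cmunu} and~\eqref{dmunu} in parallel by a three-step argument.

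First, I would use the rigidity property~\eqref{rigid} to establish the weight-space reductions $(\bigwedge^{s-r}\lie g)_{\nu-\mu}=(\bigwedge^{s-r}\lie n^-_\Psi)_{\nu-\mu}$ and $(S^{s-r}\lie g)_{\nu-\mu}=(S^{s-r}\lie n^-_\Psi)_{\nu-\mu}$. Given a weight-$(\nu-\mu)$ monomial $x_{\alpha_1}\wedge\cdots\wedge x_{\alpha_{s-r}}$ with each $x_{\alpha_j}\in\lie g_{\alpha_j}$ (possibly $\alpha_j=0$), one has $\sum_j(-\alpha_j)=\mu-\nu=\sum_{\beta\in\Psi}n_\beta\beta$ with $\sum_\beta n_\beta=s-r$. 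Splitting the $\alpha_j$ into positive-root, negative-root and zero contributions and applying~\eqref{rigid}, the total root count must be at least $s-r$; since this total is at most $s-r$, equality holds and the sharpness clause of~\eqref{rigid} then forces every $-\alpha_j\in\Psi$, with no $\lie h$-factors. The same argument works verbatim for symmetric powers.

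Next, for any finite-dimensional $\lie g$-module $M$, I would consider the projection $\pi_\mu\colon (M\otimes V(\mu))_\nu\to M_{\nu-\mu}$ extracting the coefficient of $v_\mu$ in the $V(\mu)$-factor. Given $f\in\Hom_{\lie g}(V(\nu),M\otimes V(\mu))$, the vector $w=f(v_\nu)$ is $\lie n^+$-annihilated of weight $\nu$, and $(x_i^-)^{\nu(h_i)+1}w=0$ by the defining relations of $V(\nu)$. Expanding this identity via the Leibniz rule in $M\otimes V(\mu)$ and projecting onto the $v_\mu$-line --- to which only the summand with no $x_i^-$ applied to the $V(\mu)$-factor contributes, since $(x_i^-)^jv_\mu$ has weight strictly less than $\mu$ for $j>0$ --- yields $(x_i^-)^{\nu(h_i)+1}\pi_\mu(w)=0$ in $M$. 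This defines a linear map $f\mapsto\pi_\mu f(v_\nu)$ from $\Hom_{\lie g}(V(\nu),M\otimes V(\mu))$ into the space appearing on the right-hand side of~\eqref{cmunu} (or~\eqref{dmunu}), which by Step~1 lies in $(\bigwedge^{s-r}\lie n^-_\Psi)_{\nu-\mu}$ (resp.\ its symmetric analogue).

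Finally, I would argue that this linear map is a bijection. For injectivity, suppose $\pi_\mu(w)=0$: pick a maximal weight $\mu'<\mu$ with nonzero component $m_{\mu'}$ in the $V(\mu)$-weight decomposition of $w$, derive from $x_i^+w=0$ that $m_{\mu'}\in M_{\nu-\mu'}$ must be $\lie n^+$-primitive, and then apply the rigidity argument of Step~1 at the shifted weight $\nu-\mu'$ to obtain a strict inequality (since $\mu-\mu'\in Q^+\setminus\{0\}$ strictly increases the minimum root count required in the wedge decomposition), contradicting the existence of such a primitive vector; hence $w=0$. For surjectivity, given $v$ satisfying the annihilation condition, I would inductively construct correction terms $m_{\mu'}\otimes v^{(\mu')}$ for $\mu'<\mu$ solving the obstructions to $\lie n^+$-annihilation stage by stage, the cancellations being permitted by the condition $(x_i^-)^{\nu(h_i)+1}v=0$ imposed on $v$. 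Since every HWV of weight $\nu$ in a finite-dimensional module automatically factors through $V(\nu)$, this produces the required element of $\Hom_{\lie g}(V(\nu),M\otimes V(\mu))$. The specialization $c^{(\mu,0)}_{(\nu,s)}=c^\mu_{\nu,s}$ is immediate from the definition of $c^\mu_{\nu,s}$ in Theorem~\ref{thm2}.

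I expect the main obstacle to be this last step: verifying that the rigidity of $\Psi$, applied at all shifted weights $\nu-\mu'$ with $\mu'\in\wt V(\mu)$, is strong enough both to rule out spurious HWVs in the kernel of $\pi_\mu$ and to make the cascade of correction equations in the surjectivity argument consistently solvable throughout $V(\mu)$.
\end{pf}
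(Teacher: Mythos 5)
Your Step~1 is exactly the paper's argument: the reduction $(\bigwedge^{s-r}\lie g)_{\nu-\mu}=(\bigwedge^{s-r}\lie n^-_\Psi)_{\nu-\mu}$ (and its symmetric analogue) via \eqref{rigid} is the only part of the proof the paper carries out in detail, and you do it correctly. Where you diverge is in Steps~2--3: the paper does not prove the identification of $\Hom_{\lie g}(V(\nu),M\otimes V(\mu))$ with $\{v\in M_{\nu-\mu}\,:\,(x_i^-)^{\nu(h_i)+1}v=0\}$ at all --- it obtains it from the standard isomorphism $\Hom_{\lie g}(V(\nu),M\otimes V(\mu))\cong\Hom_{\lie g}(V(\nu)\otimes V(\mu)^*,M)$ together with the theorem of Parthasarathy--Ranga Rao--Varadarajan \cite{PRV} describing $V(\nu)\otimes V(\mu)^*$ as a cyclic module with known defining relations. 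Your attempt to reprove this from scratch has a genuine gap precisely at the point you yourself flag as the main obstacle: the surjectivity of $f\mapsto\pi_\mu f(v_\nu)$ is the substantive direction of the PRV theorem, and ``inductively construct correction terms solving the obstructions stage by stage'' is a restatement of what must be proved, not an argument; nothing in your sketch explains why the condition $(x_i^-)^{\nu(h_i)+1}v=0$ makes the cascade of correction equations consistently solvable. The efficient repair is simply to cite \cite{PRV} here, as the paper does.

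Two smaller points. Your injectivity argument draws the wrong conclusion from $x_i^+w=0$: writing $w=\sum_\eta w_\eta$ with $w_\eta\in M_{\nu-\eta}\otimes V(\mu)_\eta$ and taking $\eta_0$ maximal with $w_{\eta_0}\ne0$, the vanishing of the component of $x_i^+w$ of $V(\mu)$-weight $\eta_0+\alpha_i$ gives $(1\otimes x_i^+)w_{\eta_0}=0$; it is the $V(\mu)$-factor, not the $M$-factor as you assert, that is forced to be $\lie n^+$-primitive, and since $V(\mu)$ is irreducible this forces $\eta_0=\mu$ and hence $\pi_\mu(w)\ne0$, with no appeal to rigidity needed. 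Finally, note that your map lands in the space cut out by $(x_i^-)^{\nu(h_i)+1}$, which matches \eqref{dmunu} and the definition of $c^\lambda_{\nu,s}$ in Theorem~\ref{thm2} but not the literal $(x_i^+)^{\nu(h_i)+1}$ printed in \eqref{cmunu}; the paper's own proof exhibits the same discrepancy, which appears to be a typographical slip in the statement rather than a defect in your approach.
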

\begin{pf}
Using a standard vector space isomorphism and \cite{PRV} we have \begin{align*} \Hom_{\lie g}(V(\nu),\bigwedge^{s-r} \lie g\tensor V(\mu))&\cong\Hom_{\lie g}(V(\nu)\tensor V(\mu)^*,\bigwedge^{s-r} \lie g),\\ &\cong\{ v\in (\bigwedge^{s-r} \lie g)_{\nu-\mu}\,:\, (x_i^-)^{\nu(h_i)+1}(v)=0,\,\forall\,i\in I\}.
\end{align*}
Hence \eqref{cmunu} follows if we prove that \begin{equation}\label{eq}(\bigwedge^{s-r} \lie g)_{\nu-\mu}=(\bigwedge^{s-r} \lie n^-_\Psi)_{\nu-\mu}.\end{equation}
Observe that
$(\bigwedge^{s-r} \lie g)_{\nu-\mu}$ is spanned by monomials $x_1\wedge\cdots\wedge x_{s-r}$, where $x_i\in \lie g_{\gamma_i}$,
with $\gamma_i\in R\cup\{0\}$ and $$\sum_{i=1}^{s-r} \gamma_i=\nu-\mu=-\beta_{1}-\cdots-\beta_{s-r},\qquad \beta_{k}\in\Psi,\, 1\le k\le s-r .$$  Using~\eqref{rigid} we see  that $\gamma_k\in -\Psi$ for all $1\le k\le s$ and \eqref{eq} (and hence \eqref{cmunu}) is proved. The proof of \eqref{dmunu} is similar and we omit the details.
\end{pf}

\subsection{}\label{PT2.10} We shall need the following result which was established in \cite[Propositions  3.3]{CG1}.
\begin{prop}\label{ext} Let $(\mu,r),(\nu,s)\in\Gamma$.
\begin{enumerate}[{\rm(i)}]
\item\label{PT2.10.i} We have, $$
\Ext^j_{\mathcal G_2[\Gamma]}(\ev_r V(\mu), \ev_sV(\nu))\not=0\,\implies\, j=s-r, \, (\nu,s-r)\in\Gamma(\mu,\Psi).$$
\item\label{PT2.10.ii} Suppose that $(\nu,s-r)\in\Gamma(\mu,\Psi)$. There exists an isomorphism of vector spaces
\begin{equation*}\Ext^{s-r}_{\mathcal G_2[\Gamma]}(\ev_r V(\mu), \ev_sV(\nu))\cong\Hom_{\lie g}(V(\nu),\bigwedge^{s-r} \lie g\tensor V(\mu)).\tag*{\qedsymbol}
 \end{equation*}
\end{enumerate}
\end{prop}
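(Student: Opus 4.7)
The strategy is to compute these $\Ext$ groups via a minimal projective resolution of $\ev_r V(\mu)$ in $\mathcal{G}_2[\Gamma]$ built from a Koszul-type complex attached to the abelian ideal $(\lie n^-_\Psi)_{\ad}$, and then to exploit the fact that $\Hom$ from a projective cover into a simple module picks out exactly one summand, so that all differentials become zero automatically.

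Concretely, I would construct a complex
$$P^\bullet \colon \quad \cdots \to P^{k+1} \to P^k \to \cdots \to P^0 = P(\mu,r)^\Gamma \to \ev_r V(\mu) \to 0$$
in $\mathcal{G}_2[\Gamma]$ with
$$P^k = \bigoplus_{(\nu, r+k)\in\Gamma(\mu,\Psi)} P(\nu, r+k)^\Gamma \otimes \Hom_{\lie g}\bigl(V(\nu), \bigwedge^k \lie g \otimes V(\mu)\bigr),$$
the differential $d\colon P^{k+1}\to P^k$ being induced by the classical Koszul differential on $\bigwedge^\bullet (\lie n^-_\Psi)_{\ad}$. By Theorem~\ref{thm1} and the PBW-type description of $P(\nu,r+k)^\Gamma$ obtained in the proof of Proposition~\ref{defining}, each generator $p^\Gamma_{\nu,r+k}$ is realized in $P(\mu,r)^\Gamma$ as (a choice of) highest weight vector of the copy of $V(\nu)$ sitting inside $\bigwedge^k \lie n^-_\Psi \otimes V(\mu)$, where the passage from $\bigwedge^k \lie g$ to $\bigwedge^k \lie n^-_\Psi$ is precisely the identification of weight spaces established in~\eqref{eq} via the rigidity~\eqref{rigid}. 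Exactness of $P^\bullet$ reduces, via the PBW-type decomposition realizing $P(\mu,r)^\Gamma$ as $S((\lie n^-_\Psi)_{\ad}) \otimes V(\mu)$ with $\lie n^-$ acting compatibly, to the classical exactness of the Koszul complex of the polynomial ring $S((\lie n^-_\Psi)_{\ad})$ acting on $V(\mu)$, and the rigidity~\eqref{rigid} guarantees that the whole resolution remains inside the Serre subcategory $\mathcal{G}_2[\Gamma]$.

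Once the resolution is in hand, both parts of the proposition fall out at once. Applying $\Hom_{\mathcal{G}_2[\Gamma]}(-, \ev_s V(\nu))$ and using that each $P(\nu', r+k)^\Gamma$ is the projective cover of its unique simple quotient $\ev_{r+k} V(\nu')$, we get
$$\Hom_{\mathcal{G}_2[\Gamma]}(P(\nu', r+k)^\Gamma, \ev_s V(\nu)) = \bc \cdot \delta_{\nu',\nu}\delta_{r+k,s}.$$
Hence at most one term of the cochain complex $\Hom(P^\bullet, \ev_s V(\nu))$ is nonzero, namely the one in cohomological degree $k = s-r$, and even that one survives only when $(\nu, s-r)\in\Gamma(\mu,\Psi)$; all differentials on either side of it vanish for trivial reasons. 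This simultaneously establishes part~\eqref{PT2.10.i} and yields the identification in part~\eqref{PT2.10.ii}, namely
$$\Ext^{s-r}_{\mathcal{G}_2[\Gamma]}(\ev_r V(\mu), \ev_s V(\nu)) \cong \Hom_{\lie g}(V(\nu), \bigwedge^{s-r} \lie g \otimes V(\mu)).$$

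The main obstacle is the construction of the resolution and the verification of its exactness: one must carefully define the Koszul differentials so that they are genuine morphisms in $\mathcal{G}_2[\Gamma]$, confirm that the image of each differential is exactly the kernel of the next (using both the abelian Koszul acyclicity and compatibility with the $\lie n^-$-action from $\lie g$), and keep track of the multiplicity-space decompositions at each level. The combinatorial key throughout is the rigidity~\eqref{rigid}, which ensures both that $\bigwedge^k \lie g$ and $\bigwedge^k \lie n^-_\Psi$ yield the same multiplicities in the relevant weight spaces and that no extra terms outside $\Gamma(\mu,\Psi)$ intrude into the resolution.
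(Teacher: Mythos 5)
The first thing to say is that the paper does not prove this proposition at all: it is quoted from \cite[Proposition~3.3]{CG1}, so there is no internal argument to compare yours against. Your strategy --- a Koszul-type projective resolution of $\ev_r V(\mu)$ in $\mathcal G_2[\Gamma]$ attached to the abelian ideal $(\lie n^-_\Psi)_{\ad}$, followed by the observation that $\Hom$ into a simple object kills all but one term of the resulting cochain complex --- is the natural one and is close in spirit to the cited source. The surrounding bookkeeping is sound: the identification of multiplicity spaces via \eqref{rigid} and Lemma~\ref{PT2.5}, the fact that only $(\nu,k)\in\Gamma(\mu,\Psi)$ can contribute to $P^k$ (Proposition~\ref{PT1.30}), and the collapse of the $\Hom$-complex all work as you describe (up to a harmless dualization of the multiplicity spaces, since $\Hom_{\mathcal G_2[\Gamma]}(P(\nu,s)^\Gamma\otimes M,\ev_sV(\nu))\cong M^*$).

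The gap is that the existence and exactness of $P^\bullet$ is essentially the entire content of the proposition, and you assert it rather than prove it. Two concrete problems. First, your reduction of exactness to ``the classical exactness of the Koszul complex of $S((\lie n^-_\Psi)_{\ad})$'' presupposes that each $P(\nu,r+k)^\Gamma$ is \emph{free} as an $S((\lie n^-_\Psi)_{\ad})$-module on $V(\nu)$, i.e.\ that the spanning set $\bu(\lie n^-)\bu((\lie n^-_\Psi)_{\ad})\bop_{\nu,r+k}$ realizes it as $S(\lie n^-_\Psi)\otimes V(\nu)$ as a graded vector space. The PBW argument in the proof of Proposition~\ref{defining} gives only a spanning set, not a basis; freeness amounts to the character identity $\ch P(\nu,0)^\Gamma[k]=\ch S^k(\lie n^-_\Psi)\cdot\ch V(\nu)$, i.e.\ to the statement that every PRV-type component of $S^k(\lie n^-_\Psi)\otimes V(\nu)$ actually survives in the truncation --- a Kostant-style fact about the abelian ideal $\lie n^-_\Psi$ that requires its own proof. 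Second, if you instead try to manufacture $P^\bullet$ by applying the truncation functor $(-)^\Gamma$ to the relative Koszul resolution $\bu(\lie g\ltimes\lie g_{\ad})\otimes_{\bu(\lie g)}(\bigwedge^\bullet\lie g_{\ad}\otimes V(\mu))$ in $\mathcal G_2$ --- which is where your multiplicity spaces $\Hom_{\lie g}(V(\nu),\bigwedge^k\lie g\otimes V(\mu))$ really come from --- you must confront the fact that $(-)^\Gamma$ is only right exact, so exactness of the truncated complex is not formal; this is exactly where interval-closure and \eqref{rigid} have to do real work, for instance through an Euler-characteristic comparison against the known characters of the $P(\nu)^\Gamma$ from Proposition~\ref{analogii}. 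You correctly flag this verification as ``the main obstacle,'' but leaving it unaddressed means the argument currently establishes the conclusion only modulo a statement of essentially the same depth as the proposition itself.
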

\subsection{}\label{PT2.20} Fix an enumeration of the set $\Gamma$.
Define $|\Gamma|\times|\Gamma|$-matrices $A(t)$ and $E(t)$ with entries in $\bz[t]$, by
\begin{align*}
&A(t)
=\Big(t^{s-r} [P(\mu,r)^\Gamma: \ev_s V(\nu)]\Big)_{(\nu,s),(\mu,r)\in\Gamma},\\
& E(t)
=\Big(t^{s-r} \dim\Ext^{s-r}_{\mathcal G_2[\Gamma]}(\ev_r V(\mu), \ev_sV(\nu))\Big)_{(\nu,s),(\mu,r)\in\Gamma}=
\Big(t^{s-r} c^{(\mu,r)}_{(\nu,s)}\Big)_{(\nu,s),(\mu,r)\in\Gamma},
\end{align*}
where we have used Proposition~\ref{PT2.10}\eqref{PT2.10.ii} and Lemma~\ref{PT2.5}.

\begin{pf}[Proof of Theorem~\ref{thm2}]
It was proved in \cite[Proposition 2.6 and 3.8]{CG1} that $$A(t)E(-t)={\rm Id}.$$
Therefore, for all $(\nu,s),(\kappa,p)\in\Gamma$
$$
\sum_{(\mu,r)\in\Gamma} t^{s-r} [P(\mu,r)^\Gamma:\ev_s V(\nu)] (-t)^{r-p} c^{(\kappa,p)}_{(\mu,r)}=\delta_{(\nu,s),(\kappa,p)}.
$$
In particular, taking $(\kappa,p)=(\lambda,0)$ and using Lemma~\ref{PT2.5} we obtain
$$
t^s \sum_{(\mu,r)\in\Gamma} (-1)^r [P(\mu,r)^\Gamma:\ev_s V(\nu)] c^{\lambda}_{\mu,r}=\delta_{(\nu,s),(\lambda,0)}.
$$
Multiplying both sides by $\ch V(\nu)$ and taking the sum over all $(\nu,s)\in\Gamma$ we conclude that
\begin{equation}\label{eq:PT2.20}
\sum_{(\mu,r)\in\Gamma} (-1)^r c^{\lambda}_{\mu,r} \ch_t P(\mu,r)^{\Gamma}=\ch V(\lambda).
\end{equation}
Applying Corollary~\ref{corthm1} completes the proof of Theorem \ref{thm2}.
\end{pf}

\section{Conjecture~\ref{subs:NN conj} and characters of  minimal affinizations}\label{maff}

Our interest in the modules $P(\mu,r)^{\Gamma}$ stems from the study of finite-dimensional representations of quantum loop algebras.
In the special case when $\lie g$ is a classical Lie algebra and $\mu=m\omega_i$, $i\in I$, we shall see that the modules $P(\mu,r)^{\Gamma}$ are the specialization of
the famous Kirillov-Reshetikhin modules. Conjecture~\ref{subs:NN conj} is concerned with relating the case of an arbitrary $\mu$ to generalizations of the Kirillov-Reshetikhin modules, called minimal affinizations. These were introduced in \cite{Ch1} and studied further in \cite{CP1,CP2,CP3}. More  recent results which are pertinent to this section can be found in \cite{Her2,M,NN1}.
  We begin this section by briefly explaining these results. The interested reader is referred to \cite{CH} and the references in that paper for further details on the representation theory of quantum loop algebras.
\subsection{} Let $\bc[t,t^{-1}]$ be the ring of Laurent polynomials in a variable $t$ and let $L(\lie g)$ be the loop algebra of $\lie g$, i.e $L(\lie g)=\lie g\otimes\bc[t,t^{-1}]$
with the Lie bracket given by $$
[x\otimes f, y\otimes g]=[x,y]\otimes fg,\qquad x,y\in\lie g, \, f,g\in\bc[t, t^{-1}].
$$
We identify $\lie g$ with the subalgebra $\lie g\otimes 1$ of $L(\lie g)$.
Observe that we have a natural isomorphism of Lie algebras
$$\lie g\ltimes (\lie g)_{\ad}\cong \lie g\otimes\bc[t]/(t-a)^2\cong \lie g\otimes\bc[t,t^{-1}]/(t-b)^2,\quad a\in\bc, \, b\in\bc^\times.$$
If we regard $\bc[t]$ as being graded by powers of $(t-a)$, then the first isomorphism above is one of $\bz_+$-graded Lie algebras.

\subsection{} Let $q$ be an indeterminate and let $\bu_q(\lie g)$ and $\bu_q(L(\lie g))$ be the corresponding quantized enveloping algebras defined over the field $\bc(q)$ of rational functions in $q$.
These algebras admit an $\ba$-form (where $\ba=\bc[q,q^{-1}]$), namely free $\ba$-submodules $\bu_\ba(\lie g)$, $\bu_\ba(L(\lie g))$ such that $$\bu_q(\lie g)\cong\bc(q)\otimes_\ba\bu_\ba(\lie g),\qquad \bu_q(L(\lie g))\cong\bc(q)\otimes_\ba\bu_\ba(L(\lie g)).$$ As a result, one can specialize $q$ to be a non-zero complex number $\epsilon$ as follows. Given $\epsilon\in \bc^\times$, let
$\bc_\epsilon=\ba/(q-\epsilon)$
and set $$\bu_\epsilon(\lie g)=\bc_\epsilon\otimes_\ba\bu_\ba(\lie g),\qquad \bu_\epsilon(L(\lie g))=\bc_\epsilon\otimes_\ba\bu_\ba(L(\lie g)).$$ If $\epsilon=1$, then $\bu_1(\lie g)$ and $\bu_1(L(\lie g))$ are (essentially) the universal enveloping algebras of $\lie g$ and $L(\lie g)$ respectively.

\subsection{} The theory of  integrable representations  of $\bu_q(\lie g)$ (and also of $\bu_\epsilon(\lie g)$ if $\epsilon$ is not a primitive root of unity) is identical to that of $\bu(\lie g)$ (see~\cite{Lus}). Namely any integrable representation is completely reducible, the isomorphism classes of irreducible representations are parametrized by elements of $P^+$ and the (suitably defined)  character of an irreducible representation  $V_q(\lambda)$ is given by the Weyl character formula. Moreover $V_q(\lambda)$ admits an $\ba$-form and one can define in the obvious way a representation $V_\epsilon(\lie g)$ of the algebra $\bu_\epsilon(\lie g)$, where $\epsilon\in\bc^\times$. If $\epsilon$ is not a root of unity, then $V_\epsilon(\lie g)$ is an irreducible representation of $\bu_\epsilon(\lie g)$ and $V_1(\lambda)$ is the usual finite-dimensional representation of $\bu(\lie g)$ with highest weight $\lambda$, i.e. $V_1(\lambda)\cong V(\lambda)$.

\subsection{}
On the other hand,
the theory of integrable representations of $\bu_q(L(\lie g))$ is  quite different from that of $\bu(L(\lie g))$. The one point of similarity is the classification of irreducible finite-dimensional representations of $\bu_q(L(\lie g))$ and $\bu(L(\lie g))$ (see \cite{C,CPnew,CPqa}). Thus the isomorphism classes of irreducible finite-dimensional representations of  $\bu_q(\lie g)$ (respectively  $\bu(L(\lie g))$) are
parametrized by $I$-tuples of polynomials with constant term~$1$, called the Drinfeld polynomials, in $\bc(q)[u]$ (respectively  $\bc[u]$) where $u$ is an indeterminate. It was shown in \cite{CPweyl} that if the Drinfeld polynomials
$\bpi=(\pi_i)_{i\in I}$ satisfy $\pi_i\in\ba[u]$, then the corresponding irreducible representation admits an $\ba$-form. Hence we can specialize $q$ to one and we get a representation of $\bu(L(\lie g))$ but in this case, the the specialized representation is usually reducible.

Much of the literature on the subject revolves around determining characters of irreducible representations of quantum loop algebras. For the purposes of this paper, we shall be interested in the $\bu_q(\lie g)$-character of an irreducible  finite-dimensional representation $V$ of $\bu_q(L(\lie g))$, namely in the multiplicities $[V:V_q(\lambda)]=\dim\Hom_{\bu_q(\lie g)}(V(\lambda), V)$, for $\lambda\in P^+$.
 This has proved to be a very hard problem and the strategy has been to identify interesting families of irreducible representations and focus on determining their character.

\subsection{} One such family are the so--called Kirillov-Reshetikhin modules, which appeared originally in
\cite{KR} and were motivated by their work on solvable lattice models.
 It is now customary to call a member of the two parameter family of
isomorphism classes of irreducible modules for $\bu_q(L(\lie g))$ with Drinfeld polynomials
  $$\bpi_{i,m}=(\pi_j)_{j\in I},\qquad \pi_j=\begin{cases} %
\prod\limits_{k=0}^{m-1} (1-q^{\frac12(\alpha_i,\alpha_i)(m-1-2k)}u)
,& j=i,\\
1,& \text{otherwise,}\end{cases}$$
a Kirillov-Reshetikhin module and it is denoted by $V(\bpi_{i,m})$.

 The characters of these modules were conjectured in~\cite{KR} and the  conjecture was established in \cite{Ch2} except for a few nodes of the Dynkin diagram of $E_n$, $n=6,7,8$.
In the course of proving the conjecture, it was shown (\cite[Theorem~2 and Corollary~2.1]{Ch2}) that  for $i\in I$ satisfying $\epsilon_i(\theta)\le 2$,
the $q=1$ specialization $V_1(\bpi_{i,m})$
of $V(\bpi_{i,m})$ is actually a module for $\lie g\otimes\bc[t]/((t-1)^2)$. Note that the condition $\epsilon_i(\theta)\le 2$ holds for all $i\in I$ if $\lie g$ is a classical Lie algebra.

 Furthermore if  $\varphi$
is the automorphism of $\lie g\tensor \bc[t]$ defined by $x\otimes t\to x\otimes(t+1)$, then $\varphi^*V_1(\bpi_{i,m})$ is a graded module for $\lie g\otimes\bc[t]$ and hence for the graded quotient $$\lie g\otimes\bc[t]/(t^2)\cong \lie g\ltimes\lie g_{\ad}.$$
More precisely, it was shown that if $\epsilon_i(\theta)=1$ then $\varphi^* V_1(\bpi_{i,m})\cong_{\mathcal G_2} \ev_0 V(m\omega_i)$ while for $i\in I$ with $\epsilon_i(\theta)=2$,
$\varphi^*V_1(\bpi_{i,m})$ is generated,  as a $\lie g\ltimes \lie g_{\ad}$-module, by an element $v_{i,m}$ subject to the relations
\begin{gather}\label{rel-KR.1}\lie n^+ v_{i,m}=0,\,\,
hv_{i,m}=m\omega_i(h) v_{i,m},\quad h\in\lie h,\,\,
(x^-_{\alpha_i})^{m+1} v_{i,m}=0=x^-_{\alpha_j} v_{i,m},\quad j\not=i\in I\\
\lie n^+_{\ad} v_{i,m}=0=\lie h_{\ad} v_{i,m},\quad
(x^-_{\alpha_i})_{\ad}\  v_{i,m}=0.\label{rel-KR.2}\end{gather}

\subsection{}\label{MA.30} As an application of Theorem \ref{thm1} we obtain a homological interpretation of the module $\varphi^*V_1(\bpi_{i,m})$.
Let $\lie g$ be a classical Lie algebra and given $i\in I$ set
$$
\Psi_i=\{\alpha\in R^+: \epsilon_i(\alpha)=2\}.
$$
Then either $\Psi_i=\emptyset$ or $\Psi_i=\{ \beta\in R\,:\, (\omega_i,\beta)=\max_{\alpha\in R} (\omega_i,\alpha)\}$.
\begin{prop}\label{krp} For all $i\in I$, $m\in \bz_+$ we have $\varphi^*V_1(\bpi_{i,m})\in\Ob\mathcal G_2$ and $$\varphi^*V_1(\bpi_{i,m})\cong_{\mathcal G_2} P(m\omega_i,0)^{\Gamma},\qquad
\Gamma=\Gamma(m\omega_i,\Psi_i).$$
 In particular, $\varphi^*V_1(\bpi_{i,m})$ is the projective cover of~$\ev_0 V(m\omega_i)$ in the category $\mathcal G_2[\Gamma]$.
\end{prop}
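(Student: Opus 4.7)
The plan is to match the defining relations of the two modules, using Theorem~\ref{thm1} for $P(m\omega_i,0)^\Gamma$ and Chari's result for $\varphi^*V_1(\bpi_{i,m})$. When $\epsilon_i(\theta)=1$ we have $\Psi_i=\emptyset$ and $\Gamma(m\omega_i,\emptyset)=\{(m\omega_i,0)\}$, whence $P(m\omega_i,0)^\Gamma=\ev_0 V(m\omega_i)$, matching $\varphi^*V_1(\bpi_{i,m})$ by \cite{Ch2}. So assume $\epsilon_i(\theta)=2$; then $\Psi_i=\{\alpha\in R:(\omega_i,\alpha)=\max_{\beta\in R}(\omega_i,\beta)\}$ is exactly of the shape required in Theorem~\ref{thm1} with $\xi=\omega_i$, and that theorem presents $P(m\omega_i,0)^\Gamma$ via \eqref{rel1}-\eqref{rel2}.

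Now compare. Since $m\omega_i(h_j)=m\delta_{ij}$, the relations \eqref{rel1} specialize to \eqref{rel-KR.1}, while $\epsilon_i(\alpha_i)=1$ shows $\alpha_i\notin\Psi_i$, so $(x^-_{\alpha_i})_{\ad}p^\Gamma_{m\omega_i,0}=0$ is included in \eqref{rel2}; together with the $\lie n^+_{\ad}$ and $\lie h_{\ad}$ relations this contains \eqref{rel-KR.2}. Hence $p^\Gamma_{m\omega_i,0}$ satisfies the defining KR relations, and there is a surjection $\varphi^*V_1(\bpi_{i,m})\twoheadrightarrow P(m\omega_i,0)^\Gamma$ sending $v_{i,m}\mapsto p^\Gamma_{m\omega_i,0}$.

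For the reverse surjection I need to know that $\varphi^*V_1(\bpi_{i,m})\in\Ob\mathcal G_2[\Gamma]$. This is where the explicit character of Kirillov-Reshetikhin modules from \cite[Theorem~2, Corollary~2.1]{Ch2} is invoked: its $\lie g$-isotypic decomposition consists of $V(\mu)$ with $\mu=m\omega_i-\sum_{\beta\in\Psi_i}n_\beta\beta$ and grade $\sum n_\beta$, which is precisely the membership condition for $\mathcal G_2[\Gamma]$. Projectivity of $P(m\omega_i,0)^\Gamma$ in $\mathcal G_2[\Gamma]$ (Theorem~\ref{thm1}), together with the fact that $\ev_0V(m\omega_i)$ is the unique simple quotient of $\varphi^*V_1(\bpi_{i,m})$, then yields a surjection $P(m\omega_i,0)^\Gamma\twoheadrightarrow\varphi^*V_1(\bpi_{i,m})$, $p^\Gamma_{m\omega_i,0}\mapsto v_{i,m}$. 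The composition is an endomorphism of the finite-dimensional $P(m\omega_i,0)^\Gamma$ fixing the cyclic generator, hence is the identity; both surjections are therefore mutually inverse, proving the isomorphism. The projective-cover claim then follows from Theorem~\ref{thm1}.

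The main obstacle is the verification $\varphi^*V_1(\bpi_{i,m})\in\Ob\mathcal G_2[\Gamma]$; it appears to require external input from the KR character formula rather than following purely from the defining relations, since \eqref{rel-KR.2} only constrains $(x^-_{\alpha_i})_{\ad}v_{i,m}$ while \eqref{rel2} demands vanishing of $(x^-_\alpha)_{\ad}v_{i,m}$ for all $\alpha\in R^+\setminus\Psi_i$, and a naive induction on $\Ht(\alpha)$ à la Lemma~\ref{PT1.40} cannot reach these vectors without already knowing the $\Gamma$-grading is respected.
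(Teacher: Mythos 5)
Your overall strategy --- produce surjections in both directions and compose them --- is the same as the paper's, and your first direction (checking that $p^\Gamma_{m\omega_i,0}$ satisfies \eqref{rel-KR.1} and \eqref{rel-KR.2}, hence receives a surjection from $\varphi^*V_1(\bpi_{i,m})$) matches the paper exactly. Where you diverge is in the harder direction. The paper does \emph{not} invoke the Kirillov--Reshetikhin character formula; it proves directly, by induction on $\Ht(\alpha)$, that $v_{i,m}$ satisfies the extra relations $(x^-_\alpha)_{\ad}v_{i,m}=0$ for all $\alpha\in R^+\setminus\Psi_i$, so that $\varphi^*V_1(\bpi_{i,m})$ is a quotient of the module $\bp_{\Psi_i}(m\omega_i,0)$ of Proposition~\ref{defining}, which Theorem~\ref{thm1} identifies with $P(m\omega_i,0)^\Gamma$. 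Your closing paragraph, asserting that such an induction ``cannot reach these vectors,'' is therefore mistaken. The induction works because $\alpha\in R^+\setminus\Psi_i$ forces $\epsilon_i(\alpha)\le 1$: writing $x^-_\alpha=c[x^-_j,x^-_\beta]$ with $\beta=\alpha-\alpha_j\in R^+$ (so $\beta\notin\Psi_i$ as well), either $j\ne i$, in which case $(x^-_\alpha)_{\ad}v_{i,m}=c\,x^-_j(x^-_\beta)_{\ad}v_{i,m}=0$ by the induction hypothesis together with $x^-_jv_{i,m}=0$, or $j=i$, in which case $\epsilon_i(\beta)=0$, so $x^-_\beta v_{i,m}=0$ by \eqref{rel-KR.1} and $(x^-_\alpha)_{\ad}v_{i,m}=c\,(x^-_i)_{\ad}x^-_\beta v_{i,m}=0$; the base case $\Ht(\alpha)=1$ uses $2(x^-_j)_{\ad}v_{i,m}=[x^-_j,(h_j)_{\ad}]v_{i,m}=0$ for $j\ne i$ and the last relation of \eqref{rel-KR.2} for $j=i$.

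Your alternative --- establishing $\varphi^*V_1(\bpi_{i,m})\in\Ob\mathcal G_2[\Gamma]$ from the known decomposition of the KR module and then using projectivity --- can be made to work, but it needs a sharper citation than you give: membership in $\mathcal G_2[\Gamma]$ requires the \emph{graded} multiplicities $[\varphi^*V_1(\bpi_{i,m}):\ev_sV(\mu)]$, i.e.\ not only that the $\lie g$-constituents are the $V(m\omega_i-\sum_\beta n_\beta\beta)$ but that each occurs precisely in grade $\sum_\beta n_\beta$. That is strictly more than the presentation \eqref{rel-KR.1}--\eqref{rel-KR.2}, which is all the paper imports from \cite{Ch2}, and assuming it would turn the Corollary following Proposition~\ref{krp} (the graded character formula for $\varphi^*V_1(\bpi_{i,m})$) into an input rather than an output. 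If you want the argument to be self-contained in the paper's framework, carry out the height induction above.
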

\begin{proof}
Clearly if $\Psi_i=\emptyset$ then $\Gamma=\{(m\omega_i,0)\}$ and the assertion is trivial. Suppose that $\Psi_i\not=\emptyset$. Then
by Theorem~\ref{thm1} it is sufficient to prove that $$\varphi^* V_1(\bpi_{i,m})\cong_{\mathcal G_2}
\bp_{\Psi_i}(m\omega_i,0),$$ where $\bp_{\Psi_i}(m\omega_i,0)$ was defined in Proposition~\ref{defining}. Since $\alpha_i\notin\Psi_i$, the
assignment $v_{i,m}\mapsto \bop_{m\omega_i,0}$ defines a surjective morphism
$\varphi^* V_1(\bpi_{i,m})\to\bp_{\Psi_i}(m\omega_i,0)$ of objects of~$\mathcal G_2$.  The proposition follows if we prove that $v_{i,m}$ satisfies the defining relations of $\bop_{m\omega_i,0}$ and for this, we only
need to show that
\begin{equation}\label{tmp.1}
(x^-_\alpha)_{\ad}v_{i,m}=0,\qquad \alpha\in R^+\setminus\Psi_i.
\end{equation}
The argument is by induction on~$\Ht(\alpha)$.
Note first that $\alpha_j\notin\Psi_i$ for all~$j\in I$. If~$j\not=i$ we have $2(x^-_{j})_{\ad}\,v_{i,m}=[x_{j}^-,(h_j)_{\ad}]\,v_{i,m}=0$ by
\eqref{rel-KR.1}, \eqref{rel-KR.2}, while for $j=i$,  \eqref{tmp.1} is just the last relation in~\eqref{rel-KR.2}.
Thus, the induction begins. For the inductive step,
choose $j\in I$ such that $\beta=\alpha-\alpha_j\in R^+$. Then clearly $\beta\notin\Psi_i$ as well and since $x_\alpha^- = c[x_{j}^-,x_\beta^-]$ for some $c\in\bc^\times$  we can write,
using the defining relations
$$(x^-_\alpha)_{\ad}\, v_{i,m}= \begin{cases}c\, x_{j}^-\, (x^-_\beta)_{\ad}\, v_{i,m},& j\ne i,\\ c\, (x^-_{i})_{\ad}\, x_{\beta}^-\, v_{i,m},& j=i.\end{cases}
$$
In the first case, the right hand side is zero by the induction hypothesis. In the second case, it is also zero since then $\epsilon_i(\beta)=0$
and so $x_\beta^-$ can be written as a commutator of the $x_{j}^-$ with $j\not=i$. Then $x^-_\beta v_{i,m}=0$ by \eqref{rel-KR.1}. This completes the inductive step.
\end{proof}
\begin{cor}
Let~$i\in I$, $m\in\bz_+$. Then
$$
\ch_t \varphi^* V_1(\bpi_{i,m})=\sum_{(\mu,s)\in\Gamma(m\omega_i,\Psi_i)} t^s d^{m\omega_i}_{\mu} \ch V(\mu),
$$
where
$$
d^{\lambda}_{\mu}=\dim\{ v\in S(\lie n^-_{\Psi_i})_{\mu-\lambda}\,:\, (x_i^-)^{\mu(h_i)+1}(v)=0,\,\forall i\in I\}.
$$
\end{cor}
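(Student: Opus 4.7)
The plan is to simply assemble the tools already established. By Proposition~\ref{krp}, $\varphi^*V_1(\bpi_{i,m})\cong P(m\omega_i,0)^\Gamma$ as objects of $\mathcal G_2$, where $\Gamma=\Gamma(m\omega_i,\Psi_i)$, so it suffices to compute $\ch_t P(m\omega_i,0)^\Gamma$. Unwinding the definition of the graded character from Section~\ref{MR.70},
$$
\ch_t P(m\omega_i,0)^\Gamma=\sum_{(\mu,s)\in\Lambda} t^s\,[P(m\omega_i,0)^\Gamma:\ev_sV(\mu)]\,\ch V(\mu),
$$
so the task reduces to identifying the multiplicities.

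Next I would invoke Proposition~\ref{analogii}: by Proposition~\ref{PT1.30}\eqref{PT1.30.ii}, $\Gamma$ is interval closed, so
$$
[P(m\omega_i,0)^\Gamma:\ev_sV(\mu)]=\begin{cases}\dim\Hom_{\lie g}(\bs^s(\lie g)\otimes V(m\omega_i),V(\mu)),& (\mu,s)\in\Gamma,\\ 0,& (\mu,s)\notin\Gamma.\end{cases}
$$
In the notation of Section~\ref{PT2.5} this is exactly $d^{(m\omega_i,0)}_{(\mu,s)}$. By Proposition~\ref{PT1.30}\eqref{PT1.30.iii} any $(\mu,s)\in\Gamma$ automatically satisfies $(\mu,s)\in\Gamma(m\omega_i,\Psi_i)$, so formula~\eqref{dmunu} of Lemma~\ref{PT2.5} applies and gives
$$
d^{(m\omega_i,0)}_{(\mu,s)}=\dim\{v\in \bs^s(\lie n^-_{\Psi_i})_{\mu-m\omega_i}\,:\,(x_i^-)^{\mu(h_i)+1}(v)=0,\;\forall\, i\in I\}.
$$

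Finally I would observe that for $(\mu,s)\in\Gamma(m\omega_i,\Psi_i)$ the index $s$ is uniquely determined by $\mu$ (Proposition~\ref{PT1.30}\eqref{PT1.30.i}), and by the rigidity statement~\eqref{rigid} the weight space $\bs(\lie n^-_{\Psi_i})_{\mu-m\omega_i}$ is concentrated in symmetric degree exactly~$s$. Therefore $\bs^s(\lie n^-_{\Psi_i})_{\mu-m\omega_i}=\bs(\lie n^-_{\Psi_i})_{\mu-m\omega_i}$ and $d^{(m\omega_i,0)}_{(\mu,s)}=d^{m\omega_i}_\mu$, so summing over $(\mu,s)\in\Gamma(m\omega_i,\Psi_i)$ yields the claimed identity. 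No serious obstacle is expected; the only point that requires a moment of care is the identification of the symmetric-algebra degree with the grading parameter $s$ via~\eqref{rigid}.
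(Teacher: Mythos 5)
Your proposal is correct and is exactly the intended derivation: the paper states this corollary without proof as an immediate consequence of Proposition~\ref{krp} combined with Proposition~\ref{analogii} and formula~\eqref{dmunu} of Lemma~\ref{PT2.5}, which is precisely the chain you assemble (your closing observation that \eqref{rigid} forces the weight space $S(\lie n^-_{\Psi_i})_{\mu-m\omega_i}$ to sit in the single symmetric degree $s$ is the right justification for dropping the degree superscript). The only cosmetic remark is that $\dim\Hom_{\lie g}(\bs^s(\lie g)\otimes V(m\omega_i),V(\mu))$ and the paper's $d^{(m\omega_i,0)}_{(\mu,s)}$ have the Hom written in opposite directions, which is harmless by semisimplicity.
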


\subsection{} From the mathematical point of view, it is obvious to ask if there are analogs of the Kirillov-Reshetikhin modules for a general $\lambda\in P^+$. The notion of a minimal affinization was introduced in \cite{Ch1} to answer exactly this question. We say that a simple $\bu_q(L(\lie g))$-module $V(\bpi)$ associated to Drinfeld polynomials $\bpi\in (\bc(q)[u])^{|I|}$ is an  affinization of $V_q(\lambda)$,
$\lambda\in P^+$ if
$$[V(\bpi):V_q(\lambda)]=1,\qquad [V(\bpi):V_q(\mu)]\ne 0\, \implies\,\mu\le\lambda.$$
 In fact, this happens if and only if $\lambda=\sum_{i\in I} (\deg\pi_i)\omega_i$.
We say that $V(\bpi)$ is a minimal affinization of $V_q(\lambda)$ if given any other affinization $V(\bpi')$ of $V_q(\lambda)$, one of the following holds:
\begin{enumerate}[{\rm(i)}]
\item $V(\bpi')$ is isomorphic to $V(\bpi)$ as a $\bu_q(\lie g)$-module or
\item for all $\nu\in P^+$ either $[V(\bpi):V_q(\nu)]\le [V(\bpi'):V_q(\nu)] $ or  there exists $\nu'> \nu$ such that $[V(\bpi):V_q(\nu')]< [V(\bpi'):V_q(\nu')]$.
\end{enumerate}
It was proved in \cite{Ch1} that there are finitely many $\bu_q(\lie g)$-isomorphism classes of minimal affinizations associated with a given $\lambda$. In the case when $\lambda=m\omega_i$, it was proved that the Kirillov-Reshetikhin module $V(\bpi_{i,m})$ is indeed minimal. The more general problem of determining $\bpi$ so that $V(\bpi)$ was a minimal affinization was addressed in \cite{CP1,CP2}. The results were complete  except in the case when $\lie g$ is of type $D_n$ or $E_n$,
where there are significant problems arising from the trivalent node.

\subsection{} There is recent literature on the subject of minimal affinizations.  The results of \cite{CPweyl} show that one can specialize a  minimal affinization $V(\bpi)$  of $\lambda$ to get an indecomposable representation $V_1(\bpi)$ of $L(\lie g)$  and a graded representation $\varphi^*V_1(\bpi)$ of $\lie g\tensor \bc[t]$. In \cite{M} a partial result, similar to the one for Kirillov-Reshetikhin modules discussed above was given. We formulate it in the language of this paper, the translation from the language of \cite{M} is the same as the one given for $m\omega_i$ in Proposition \ref{krp}.
Suppose that $\lie g$ is of type $B_n$, $C_n$ or~$D_{n+1}$, with the numbering of the nodes in the Dynkin diagram as in~\cite{Bo},
and suppose for simplicity that $\lambda\in P^+$ is such that $\lambda(h_i)=0$, $i\ge n$. Set
$$
i_\lambda=
\max\{i\in I\,:\, \lambda(h_i)>0\}
$$
and let $\Psi_\lambda:=\Psi_{i_\lambda}$ as defined in Section~\ref{krp}.

Then we have the following:
\begin{prop}
The $\lie g\tensor \bc[t]$-module $\varphi^*V_1(\bpi)$ satisfies $$(\lie g\otimes t^2\bc[t])\varphi^*V_1(\bpi)=0.$$ In particular, we can regard $\varphi^*V_1(\bpi)$ as a module for $\lie g\ltimes\lie g_{\ad}$ and moreover,  in this case it is a quotient of $\bp_{\Psi_\lambda}(\lambda,0)$ in the category~$\mathcal G_2$.
\end{prop}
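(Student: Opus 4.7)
The plan is to handle the two assertions of the proposition in sequence. The first, $(\lie g\otimes t^2\bc[t])\varphi^*V_1(\bpi)=0$, I would take as input from~\cite{M}. Granting this, the isomorphism $\lie g\otimes\bc[t]/(t^2)\cong\lie g\ltimes\lie g_{\ad}$ is one of $\bz_+$-graded Lie algebras, so $\varphi^*V_1(\bpi)$ acquires the structure of an object of $\mathcal G_2$. Specializing the affinization property $[V(\bpi):V_q(\lambda)]=1$ together with $[V(\bpi):V_q(\mu)]\ne 0\implies \mu\le\lambda$ from $q=1$, I would conclude that $\varphi^*V_1(\bpi)$ is cyclic on a weight vector $v_\lambda$ of weight $\lambda$ sitting in grade zero, and that its $\lambda$-weight space is one-dimensional and concentrated in grade~$0$.

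To prove the second assertion, I would invoke Proposition~\ref{defining}: that result presents $\bp_{\Psi_\lambda}(\lambda,0)$ by generators and relations, so exhibiting $\varphi^*V_1(\bpi)$ as a quotient in $\mathcal G_2$ reduces to verifying that the generator $v_\lambda$ satisfies the defining relations~\eqref{rel1a} and~\eqref{rel1b} of $\bop_{\lambda,0}$. The relations~\eqref{rel1a} are the standard highest weight relations and hold by the previous paragraph. Among the relations in~\eqref{rel1b}, the identities $\lie n^+_{\ad}v_\lambda=0$ and $\lie h_{\ad}v_\lambda=0$ are immediate from weight-and-grade considerations: $\lie n^+_{\ad}v_\lambda$ would lie in weight spaces strictly above $\lambda$ and hence vanishes, while $\lie h_{\ad}v_\lambda$ sits in the weight-$\lambda$ component of grade~$1$, which is zero because the one-dimensional $\lambda$-weight space of $\varphi^*V_1(\bpi)$ is concentrated in grade~$0$.

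The main obstacle is to verify the remaining relations $(x^-_\alpha)_{\ad}v_\lambda=0$ for $\alpha\in R^+\setminus\Psi_\lambda$. My plan here would be to take~\cite{M} as providing the base of the induction (annihilation for certain simple and low-height roots, using the standing hypothesis that $\lambda(h_i)=0$ for $i\ge n$), and then to extend to arbitrary $\alpha\in R^+\setminus\Psi_\lambda$ by induction on $\Ht(\alpha)$, following the pattern of the argument used in the proof of Proposition~\ref{krp}. Writing $\alpha=\alpha_j+\beta$ for a suitable $j\in I$ and $\beta\in R^+$, the identity $(x^-_\alpha)_{\ad}=c[x^-_j,(x^-_\beta)_{\ad}]$ expresses the action of $(x^-_\alpha)_{\ad}$ on $v_\lambda$ as $c\bigl(x^-_j(x^-_\beta)_{\ad}v_\lambda-(x^-_\beta)_{\ad}x^-_j v_\lambda\bigr)$, and the combinatorial core is the selection of $j$ so that $\beta\in R^+\setminus\Psi_\lambda$ as well, which is always possible thanks to the explicit form $\Psi_\lambda=\{\alpha\in R^+:\epsilon_{i_\lambda}(\alpha)=2\}$ together with the shape of the classical root systems. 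With $\beta$ so chosen, the first term vanishes by the inductive hypothesis; the second, where $x^-_j v_\lambda$ is generally nonzero, has to be absorbed using the relations already established on $v_\lambda$ and the additional input from~\cite{M}. The delicate point—and the step I expect to require the most work—is the uniform case analysis, depending on the type of $\lie g$, that guarantees such a reduction is available for every $\alpha\in R^+\setminus\Psi_\lambda$.
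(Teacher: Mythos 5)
The paper gives no proof of this proposition at all: it is presented as a reformulation of a theorem of~\cite{M}, with the remark that ``the translation from the language of \cite{M} is the same as the one given for $m\omega_i$ in Proposition~\ref{krp}.'' In other words, all of the relations \eqref{rel1a} and \eqref{rel1b} for the cyclic generator of $\varphi^*V_1(\bpi)$ --- including $(x^-_\alpha)_{\ad}v_\lambda=0$ for \emph{every} $\alpha\in R^+\setminus\Psi_\lambda$ --- are imported wholesale from~\cite{M}, and the only work left is matching notation and then applying Proposition~\ref{defining} to get the surjection from $\bp_{\Psi_\lambda}(\lambda,0)$. Your framing (defer to~\cite{M}, verify the presentation of $\bp_{\Psi_\lambda}(\lambda,0)$) is therefore in the same spirit, and your treatment of \eqref{rel1a}, of $\lie n^+_{\ad}v_\lambda=0$ and of $\lie h_{\ad}v_\lambda=0$ is fine.

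The gap is in your plan to take from~\cite{M} only the annihilation by $(x^-_{\alpha_j})_{\ad}$ for simple (or low-height) roots and to rederive $(x^-_\alpha)_{\ad}v_\lambda=0$ for general $\alpha\in R^+\setminus\Psi_\lambda$ by the height induction of Proposition~\ref{krp}. That induction closes in the Kirillov--Reshetikhin case only because of two accidents of $\lambda=m\omega_i$: in the branch $j\ne i$ the troublesome term $(x^-_\beta)_{\ad}x^-_j v_{i,m}$ vanishes outright since $x^-_j v_{i,m}=0$ by \eqref{rel-KR.1}, and in the branch $j=i$ one has both $(x^-_i)_{\ad}v_{i,m}=0$ as a stated relation and $\epsilon_i(\beta)=0$, forcing $x^-_\beta v_{i,m}=0$. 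For a general $\lambda$ with several $\lambda(h_j)>0$, the vector $x^-_j v_\lambda$ is nonzero for each such $j$, and the relations satisfied by the cyclic generator give you no control over $(x^-_\beta)_{\ad}x^-_j v_\lambda$: the only identity available, $(x^-_\alpha)_{\ad}v_\lambda=c\,x^-_j(x^-_\beta)_{\ad}v_\lambda-c\,(x^-_\beta)_{\ad}x^-_j v_\lambda$, is exactly the relation you are trying to exploit and is circular as a means of killing the second term. Establishing these relations for all of $R^+\setminus\Psi_\lambda$ genuinely requires the quantum-affine input (the $\ell$-weight analysis) of~\cite{M}; it is the content of that theorem, not a formal consequence of its low-height cases. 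So you should cite~\cite{M} for the full set of relations \eqref{rel1b}, after which the proposition reduces to the universal property of $\bp_{\Psi_\lambda}(\lambda,0)$ exactly as you describe.
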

It is conjectured in~\cite{M} (with a weaker restriction on~$\lambda$) that in fact $\varphi^* V_1(\bpi)\cong \bp_{\Psi_\lambda}(\lambda,0)$.
To prove this conjecture it is sufficient to show that the two modules have the same $\lie g$-character and this brings us naturally to another recent conjecture and  result on minimal affinizations.  In~\cite{NN1}
W.~Nakai and T.~Nakanishi conjectured that the $\bu_q(\lie g)$-character of a minimal affinization  of $V_q(\lambda)$
is given, as an element of $\bz[P]$, by the Jacobi-Trudi determinant $\bh_\lambda$ (see Section \ref{subs:NN conj}).
 In the case when $\lambda=m\omega_i$ the conjecture is known to be true through the work of~\cite{KNH,KOS,KTs}, \cite{Nak1} and \cite{Her1} (cf.~\cite{NN1} for the details). Together with Proposition \ref{krp} we see that Conjecture \ref{nnmcg} is true for $\lambda=m\omega_i$.
The conjecture is  also proved for~$\lie g$ of type~$B_n$ and $\lambda\in P^+$ with $\lambda(h_n)=0$  in \cite{Her2}. Using this result Moura was able to prove his conjecture when $i_\lambda\le 3$.

\subsection{}  It should be clear by now that Conjecture \ref{cgconj} is an amalgamation of the conjectures of Moura, Nakai-Nakanishi and Theorem \ref{thm1} of this paper. One way to prove Conjecture~\ref{cgconj} would be to proceed by induction on $i_\lambda$. We will need the following easily verified description of the set $\Psi_i$, $1\le i<n$.
\begin{equation}\label{psii}
\Psi_i=\begin{cases}
\{ \omega_r+\omega_s-\omega_{r-1}-\omega_{s-1}\,:\, 1\le r<s\le i\},& \lie g=B_n, D_{n+1}\\
\{ \omega_r+\omega_s-\omega_{r-1}-\omega_{s-1}\,:\, 1\le r\le s\le i\},&\lie g=C_n,
\end{cases}
\end{equation} 
where we set $\omega_0=0$.
As a consequence, we have
\begin{lem}\label{iinc} Let $\lambda\in P^+$ be such that $i_\lambda<n$ and let $(\mu,s)\in\Gamma(\lambda,\Psi_\lambda)$. Then $i_\mu\le i_\lambda$.\qedhere\end{lem}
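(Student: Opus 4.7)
The plan is to use the explicit description of $\Psi_\lambda=\Psi_{i_\lambda}$ provided in formula~\eqref{psii} together with the definition of~$i_\lambda$. Since $(\mu,s)\in\Gamma(\lambda,\Psi_\lambda)$, we can write
$$
\mu=\lambda-\sum_{\beta\in\Psi_{i_\lambda}} n_\beta\beta,\qquad n_\beta\in\bz_+,
$$
so it suffices to prove that for every $k>i_\lambda$ and every $\beta\in\Psi_{i_\lambda}$ one has $\beta(h_k)=0$. Indeed, combined with $\lambda(h_k)=0$ for $k>i_\lambda$ (which follows from the very definition of $i_\lambda$ together with the assumption $\lambda(h_i)=0$ for $i\ge n$ and the fact that $i_\lambda<n$), this will force $\mu(h_k)=0$ for all $k>i_\lambda$, yielding $i_\mu\le i_\lambda$.

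To carry out the computation, recall that $\omega_r(h_k)=\delta_{r,k}$. By~\eqref{psii} every $\beta\in\Psi_{i_\lambda}$ has the form $\beta=\omega_r+\omega_s-\omega_{r-1}-\omega_{s-1}$ with $1\le r\le s\le i_\lambda$ (where we set $\omega_0=0$). For $k>i_\lambda$ we then have $r,s\le i_\lambda<k$, so $\delta_{r,k}=\delta_{s,k}=0$; moreover $r-1,s-1\le i_\lambda-1<k$, so $\delta_{r-1,k}=\delta_{s-1,k}=0$. Hence $\beta(h_k)=0$, as desired.

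There is essentially no obstacle here: the only point worth noting is that the description \eqref{psii} fits the hypothesis $i_\lambda<n$, since that guarantees the relevant nodes $r,s,r-1,s-1$ all lie in the range where the formula $\omega_r(h_k)=\delta_{r,k}$ applies without the complications that arise at the end of the Dynkin diagram (nodes $\ge n$). If $\mu=0$, the conclusion $i_\mu\le i_\lambda$ holds trivially under the usual convention.
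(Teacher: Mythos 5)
Your proof is correct and follows exactly the route the paper intends: the lemma is stated as an immediate consequence of the explicit description \eqref{psii}, and your computation that each $\beta\in\Psi_{i_\lambda}$ is a $\bz$-combination of $\omega_r$ with $r\le i_\lambda$, hence $\beta(h_k)=0$ for $k>i_\lambda$, is precisely the verification left to the reader. The observation that $\mu\in P^+$ is what upgrades $\mu(h_k)\le 0$ to $\mu(h_k)=0$ is handled correctly (indeed trivially, since $\beta(h_k)=0$ exactly).
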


\subsection{} If $i_\lambda=1$ then $\lambda=k\omega_1$ and  the result is known. For $i_\lambda=i$, we  proceed by a further induction (with respect to $\le$) on $\lambda$. If $\lambda\in P^+$ is minimal with $i_\lambda=i$, then for all $(\mu,s)\in\Gamma(\lambda,\Psi_\lambda)$ we have by Lemma \ref{iinc} that  $i_\mu<i$ and hence the induction hypothesis applies. The idea now is to use
  the Koike-Terada formulae \cite[Theorems~1.3.2, 1.3.3]{KT}
$$
\ch V(\lambda)=\left\{\begin{array}{ll} \det\big( \sum_{r=0}^j \boh_{\lambda_i-i-j+2r}\big)_{1\le i,j\le i_\lambda},&\lie g=B_n, D_{n+1},\\
\det\big(\boh_{\lambda_i-i+j}-\boh_{\lambda_i-i+j-2}\\\hphantom{\det\big(}+(1-\delta_{j,1})(\boh_{\lambda_i-i-j+2}-\boh_{\lambda_i-i-j})\big)_{1\le i,j\le i_\lambda},& \lie g=C_n,\end{array}\right.
$$
for the character of a simple $\lie g$-module to check that \eqref{nnmcg} holds
as an identity in the ring $\bz[\boh_k\,:\, k\in\bz_+]$.

So far, we have used a computer program to check this for $\lambda\in P^+$ with $i_\lambda\le 5$. As an example of the computation, suppose that
$\lie g$ is  of type $B_n$, $D_{n+1}$ and that $i_\lambda=3$.  Then $$\Psi_\lambda=\Psi_3=\{\omega_2,\omega_1+\omega_3-\omega_2, \omega_3-\omega_1\}.
$$
Since $|\Psi_\lambda|=3$, we conclude that $\bigwedge^s \lie n_{\Psi_\lambda}^-=0$ if $s>3$. Since the set $\Psi_\lambda$ is linearly independent in this case,
$\eta\in \wt \bigwedge\lie n^-_{\Psi_\lambda}$ if and only if $\eta=\sum_{\beta\in S} \beta$ for some $S\subset\Psi_\lambda$ and
all weight spaces of $\bigwedge\lie n^-_{\Psi_\lambda}$ are one-dimensional.
The definition of $c_{\mu,s}^\lambda$ implies that the left hand side of \eqref{nnmcg} contains at most eight terms, $c_{\mu,s}^\lambda=0$ if $s>3$ and
$c_{\mu,s}^\lambda\le 1$. Finally, it is easy to check that $c_{\mu,s}^\lambda=1$ if and only if $\mu=\lambda-\sum_{\beta\in S} \beta\in P^+$ where
$S\subset \Psi_\lambda$ and $|S|=s$. We list the elements $(\mu,s)\in\Gamma(\lambda,\Psi_\lambda)$ for which $c_{\mu,s}^\lambda=1$ below
\begin{alignat*}{2}
&(\lambda,0)& \\
 &(\lambda-\omega_2,1)& \qquad &\lambda(h_2)\ge 1\\
 &(\lambda-\omega_1+\omega_2-\omega_3,1)& &\lambda(h_1),\lambda(h_3)\ge 1\\
 &(\lambda+\omega_1-\omega_3,1)& &\lambda(h_3)\ge 1\\
 &(\lambda-\omega_1-\omega_3,2)& &\lambda(h_1),\lambda(h_3)\ge 1\\
 &(\lambda+\omega_1-\omega_2-\omega_3,2)&  &\lambda(h_2),\lambda(h_3)\ge 1\\
 &(\lambda+\omega_2-2\omega_3,2)& &\lambda(h_2)\ge 2\\
 &(\lambda-2\omega_3,3)& &\lambda(h_3)\ge 2.
\end{alignat*}
Suppose that $\lambda$ satisfies $i_\lambda=4$. We have $|\Psi_4|=6$ and so the left hand side of~\eqref{nnmcg} contains 
at most $64$ terms. In fact, $|\wt \bigwedge \lie n^-_{\Psi_3}|=54$ and 6 weight spaces have dimension $2$ while 2 weight spaces are of dimension~3. 
In the table below we list the elements $(\mu,s)$ of $\Gamma(\lambda,\Psi)$ which contribute to the sum in~\eqref{nnmcg} together with the 
values of $c^\lambda_{\mu,s}$. To shorten the notation,
we use the convention that $c^\lambda_{\mu,s}=0$ if $\mu\notin P^+$. We have 
{\small
\begin{alignat*}{4}
    &(\lambda,0) &\quad& 1 &\quad
    &(\lambda+\omega _1-\omega _3,1) &\quad& 1 \\
    &(\lambda+\omega _1-\omega _2+\omega _3-\omega _4,1) && 1 &
    &(\lambda+\omega _2-\omega _4,1) && 1 \\
    &(\lambda-\omega _2,1) && 1 &
    &(\lambda-\omega _1+\omega _2-\omega _3,1) && 1 \\
    &(\lambda-\omega _1+\omega _3-\omega _4,1) && 1 &
    &(\lambda+2 \omega _1-\omega _2-\omega _4,2) && 1 \\
    &(\lambda+\omega _1+\omega _2-\omega _3-\omega _4,2) && 1 &
    &(\lambda+\omega _1+\omega _3-2 \omega _4,2) && 1 \\
    &(\lambda+\omega _1-\omega _2-\omega _3,2) && 1 &
    &(\lambda+\omega _1-2 \omega _2+\omega _3-\omega _4,2) && 1 \\
    &(\lambda-\omega _4,2) && \sum\nolimits_{i=1}^3 \delta(\lambda(h_i)\ge 1)&&(\lambda+\omega _2-2 \omega _3,2) && 1 
      \\
    &(\lambda-\omega _2+2 \omega _3-2 \omega _4,2) && 1 &
    &(\lambda-\omega _1+2 \omega _2-\omega _3-\omega _4,2) && 1 \\
    &(\lambda-\omega _1+\omega _2+\omega _3-2 \omega _4,2) && 1 &
    &(\lambda-\omega _1-\omega _3,2) && 1 \\
    &(\lambda-\omega _1-\omega _2+\omega _3-\omega _4,2) && 1 &
    &(\lambda-2 \omega _1+\omega _2-\omega _4,2) && 1 \\
    &(\lambda+2 \omega _1-2 \omega _4,3) && 1 &
    &(\lambda+2 \omega _1-2 \omega _2-\omega _4,3) && 1 \\
    &(\lambda+\omega _1-\omega _3-\omega _4,3) & &\delta(\lambda(h_2)\geq
      1)+1 &
    &(\lambda+\omega _1-\omega _2+\omega _3-2 \omega _4,3) && 2 \\
    &(\lambda+\omega _2-2 \omega _4,3) & &\delta(\lambda(h_1)\geq
      1)+\delta(\lambda(h_3)\geq 1)     &&(\lambda+2 \omega _2-2 \omega _3-\omega _4,3) && 1 \\
    &(\lambda+2 \omega _3-3 \omega _4,3) && 1 &
    &(\lambda-2 \omega _3,3) && 1 \\
    &(\lambda-\omega _2-\omega _4,3) & &\delta(\lambda(h_1)\geq
      1)+\delta(\lambda(h_3)\geq 1) &
    &(\lambda-2 \omega _2+2 \omega _3-2 \omega _4,3) && 1 \\
    &(\lambda-\omega _1+\omega _3-2 \omega _4,3) & &\delta(\lambda(h_2)\geq
      1)+1&&(\lambda-\omega _1+\omega _2-\omega _3-\omega _4,3) && 2 \\
    &(\lambda-2 \omega _1+2 \omega _2-2 \omega _4,3) && 1 &
    &(\lambda-2 \omega _1-\omega _4,3) && 1 \\
    &(\lambda+2 \omega _1-\omega _2-2 \omega _4,4) && 1 &
    &(\lambda+\omega _1+\omega _2-\omega _3-2 \omega _4,4) && 1 \\
    &(\lambda+\omega _1+\omega _3-3 \omega _4,4) && 1 &
    &(\lambda+\omega _1-\omega _2-\omega _3-\omega _4,4) && 1 \\
    &(\lambda+\omega _1-2 \omega _2+\omega _3-2 \omega _4,4) && 1 &
    &(\lambda+\omega _2-2 \omega _3-\omega _4,4) && 1 \\
    &(\lambda-2 \omega _4,4) & & \sum\nolimits_{i=1}^3 \delta(\lambda(h_i)\ge 1) &
    &(\lambda-\omega _2+2 \omega _3-3 \omega _4,4) && 1 \\
    &(\lambda-\omega _1+2 \omega _2-\omega _3-2 \omega _4,4) && 1 &
    &(\lambda-\omega _1+\omega _2+\omega _3-3 \omega _4,4) && 1 \\
    &(\lambda-\omega _1-\omega _3-\omega _4,4) && 1 &
    &(\lambda-\omega _1-\omega _2+\omega _3-2 \omega _4,4) && 1 \\
    &(\lambda-2 \omega _1+\omega _2-2 \omega _4,4) && 1 &
    &(\lambda+\omega _1-\omega _3-2 \omega _4,5) && 1 \\
    &(\lambda+\omega _1-\omega _2+\omega _3-3 \omega _4,5) && 1 &
    &(\lambda+\omega _2-3 \omega _4,5) && 1 \\
    &(\lambda-\omega _2-2 \omega _4,5) && 1 &
    &(\lambda-\omega _1+\omega _2-\omega _3-2 \omega _4,5) && 1 \\
    &(\lambda-\omega _1+\omega _3-3 \omega _4,5) && 1 &
    &(\lambda-3 \omega _4,6) && 1,
\end{alignat*}
}
\noindent
where $\delta(P)=1$ if $P$ is true and $\delta(P)=0$ otherwise. Note that for one-dimensional weight spaces the extra condition in the definition of~$c^\lambda_{\mu,s}$ is 
always vacuous.

Let $\lie g$ be of type $C_n$ and assume that $i_\lambda=3$. We have $|\Psi_3|=6$ and so the left hand side of~\eqref{nnmcg} contains 
at most $64$ terms. In fact, $|\wt \bigwedge \lie n^-_{\Psi_3}|=51$ and 13 weight spaces have dimension $2$. 
In the table below we list the elements $(\mu,s)$ of $\Gamma(\lambda,\Psi)$ which contribute to the sum in~\eqref{nnmcg} together with the 
value of $c^\lambda_{\mu,s}$, with the same conventions as above
{\small
\begin{alignat*}{4}
    &(\lambda,0) &\quad& 1 &\qquad 
    &(\lambda+2 \omega _1-2 \omega _2,1) &\quad& 1 \\
    &(\lambda+\omega _1-\omega _3,1) &&\delta(\lambda(h_2)\geq 1) &
    &(\lambda+2 \omega _2-2 \omega _3,1) && 1 \\
    &(\lambda-\omega _2,1) &&\delta(\lambda(h_1)\geq 1) &
    &(\lambda-\omega _1+\omega _2-\omega _3,1) && 1 \\
    &(\lambda-2 \omega _1,1) && 1 &
    &(\lambda+3 \omega _1-2 \omega _2-\omega _3,2) && 1 \\
    &(\lambda+2 \omega _1-2 \omega _3,2) &&\delta(\lambda(h_2)\geq 1) &
    &(\lambda+2 \omega _1-3 \omega _2,2) && 1 \\
    &(\lambda+\omega _1-\omega _2-\omega _3,2) &&\delta(\lambda(h_1)\geq
      1)+\delta(\lambda(h_2)\geq 2) &
 &(\lambda+\omega _1+2 \omega _2-3 \omega _3,2) && 1 
 \\
    &(\lambda+\omega _2-2 \omega _3,2) &&\delta(\lambda(h_1)\geq
      1)+\delta(\lambda(h_2)\geq 1) &&(\lambda-2 \omega _1+2 \omega _2-2 \omega _3,2) && 1 
    \\
     &(\lambda-\omega _1-\omega _3,2) &&\delta(\lambda(h_1)\geq
      2)+\delta(\lambda(h_2)\geq 1)&
    &(\lambda-\omega _1+3 \omega _2-3 \omega _3,2) && 1 
    \\
    &(\lambda-2 \omega _2,2) &&\delta(\lambda(h_1)\geq 1) &
    &(\lambda-2 \omega _1-\omega _2,2) && 1 \\
    &(\lambda-3 \omega _1+\omega _2-\omega _3,2) && 1 &
    &(\lambda+3 \omega _1-3 \omega _3,3) && 1 \\
    &(\lambda+2 \omega _1-\omega _2-2 \omega _3,3) &&\delta(\lambda(h_2)\geq
      2)+1 &&(\lambda+3 \omega _1-3 \omega _2-\omega _3,3) && 1 \\
    &(\lambda+\omega _1+\omega _2-3 \omega _3,3) &&\delta(\lambda(h_1)\geq
      1)+\delta(\lambda(h_2)\geq 1) & &(\lambda-3 \omega _1-\omega _3,3) && 1 
     \\
    &(\lambda-2 \omega _3,3) & & 2 \delta(\lambda(h_1)\geq 1\land \lambda(h_2)\geq 1)  & &(\lambda+3 \omega _2-4 \omega _3,3) && 1 
      \\
    &(\lambda-\omega _1+2 \omega _2-3 \omega _3,3) &&\delta(\lambda(h_1)\geq
      2)+1 &&(\lambda-3 \omega _2,3) && 1 \\
    &(\lambda-\omega _1-\omega _2-\omega _3,3) &&\delta(\lambda(h_1)\geq
      2)+\delta(\lambda(h_2)\geq 2) &
    &(\lambda-3 \omega _1+3 \omega _2-3 \omega _3,3) && 1 \\
     &(\lambda-2 \omega _1+\omega _2-2 \omega _3,3) &&\delta(\lambda(h_2)\geq
      1)+1 \\
    &(\lambda+\omega _1-2 \omega _2-\omega _3,3) &&\delta(\lambda(h_1)\geq
      1)+1 \\
    &(\lambda+2 \omega _1+\omega _2-4 \omega _3,4) && 1   & &(\lambda+3 \omega _1-\omega _2-3 \omega _3,4) && 1 \\ 
    &(\lambda+\omega _1-3 \omega _3,4) &&\delta(\lambda(h_1)\geq
      1)+\delta(\lambda(h_2)\geq 1) & &(\lambda+2 \omega _1-2 \omega _2-2 \omega _3,4) && 1 \\
    &(\lambda+2 \omega _2-4 \omega _3,4) &&\delta(\lambda(h_1)\geq 1) &
   &(\lambda+\omega _1-3 \omega _2-\omega _3,4) && 1 
    \\
    &(\lambda-\omega _2-2 \omega _3,4) &&\delta(\lambda(h_1)\geq
      1)+\delta(\lambda(h_2)\geq 2) &
  &(\lambda-\omega _1-2 \omega _2-\omega _3,4) && 1 \\
    &(\lambda-\omega _1+\omega _2-3 \omega _3,4) &&\delta(\lambda(h_1)\geq
      2)+\delta(\lambda(h_2)\geq 1) &
    &(\lambda-2 \omega _1+3 \omega _2-4 \omega _3,4) && 1 \\
    &(\lambda-2 \omega _1-2 \omega _3,4) &&\delta(\lambda(h_2)\geq 1) &
    &(\lambda-3 \omega _1+2 \omega _2-3 \omega _3,4) && 1 \\
    &(\lambda+2 \omega _1-4 \omega _3,5) && 1 &
    &(\lambda+\omega _1-\omega _2-3 \omega _3,5) && 1 \\
    &(\lambda+\omega _2-4 \omega _3,5) &&\delta(\lambda(h_1)\geq 1) &
    &(\lambda-2 \omega _2-2 \omega _3,5) && 1 \\
    &(\lambda-\omega _1-3 \omega _3,5) &&\delta(\lambda(h_2)\geq 1) &
    &(\lambda-2 \omega _1+2 \omega _2-4 \omega _3,5) && 1 \\
    &(\lambda-4 \omega _3,6) && 1
\end{alignat*}
}
\noindent
It should be noted that in this case the extra condition in the definition of~$c^\lambda_{\mu,s}$ (cf.~\ref{MR.100}) is not vacuous even
for one-dimensional weight spaces.

These computations illustrate the general phenomenon. The sum in the left hand side of~\eqref{nnmcg} contains at most $2^{|\Psi_\lambda|}$ terms although
the number of simples occurring in $P(\lambda,0)^{\Gamma(\lambda,\Psi_\lambda)}$ and their multiplicities grow much faster. Moreover, for $\lambda\gg 0$ \eqref{nnmcg} becomes
$$
\sum_{S\subset \Psi} (-1)^{|S|} \bh_{\lambda-\sum_{\beta\in S}\beta}=\ch V(\lambda).
$$



\end{document}